\newtheorem {lemma}{Lemma}\newtheorem {proposition}{Proposition}
\newtheorem {theorem}{Theorem}
\newtheorem {definition}{Definition}
\newtheorem {example}{Example}
\newtheorem {corollary}{Corollary}
\newtheorem {remark}{Remark}
\def \N {{\mathbb N}}
\def \R {{\mathbb R}}
\def\bra{\langle}
\def\cet{\rangle}
\newcommand{\norm}[1]{\left\|#1 \right\|}
\newcommand{\T}{\mathbb{T}}
\def\expec{\mathbb{E}}
\DeclareMathOperator*{\argmin}{arg\,min}
\newcommand{\map}{u_{MAP}}
\newcommand{\cm}{u_{CM}}
\def \p{\partial}
\def \sign{{\rm sign}}
\def \D{{\widetilde D}}
\newcommand{\correction}[1]{#1}
\title[MAP estimates in infinite-dimensional Bayesian inverse problems]{Maximum a posteriori probability estimates in infinite-dimensional Bayesian inverse problems}
\author{Tapio Helin$^*$ and Martin Burger$^\circ$}
\date{\noindent \today}
\thanks{\noindent $^*$ Department of Mathematics and Statistics, University of Helsinki. tapio.helin@helsinki.fi 
\\
$^\circ$ Institute for Computational and Applied Mathematics, Westf\"alische Wilhelms-Universit\"at M\"unster, and Cells in Motion Cluster of Excellence, M\"unster. martin.burger@wwu.de}
\begin{document}

\begin{abstract}
A demanding challenge in Bayesian inversion is to efficiently characterize the posterior distribution.
This task is problematic especially in high-dimensional non-Gaussian problems, where the structure of
the posterior can be very chaotic and difficult to analyse.
Current inverse problem literature often approaches the problem by considering suitable point estimators
for the task. Typically the choice is made between the maximum a posteriori (MAP) or the conditional mean (CM)
estimate. 

The benefits of either choice are not well-understood from the perspective of infinite-dimensional theory.
Most importantly, there exists no general scheme regarding how to connect the topological description of a MAP estimate to a variational problem. The recent results by Dashti and others \cite{Dashti13} resolve this issue for non-linear inverse problems in Gaussian framework.
In this work we improve the current understanding by introducing a novel concept called the weak MAP (wMAP) estimate.
We show that any MAP estimate in the sense of \cite{Dashti13} is a wMAP estimate and, moreover, how the wMAP estimate connects to a variational formulation in general infinite-dimensional non-Gaussian problems. The variational formulation enables to study many properties of the infinite-dimensional MAP estimate that were earlier impossible to study.

In a recent work by the authors \cite{BL14} the MAP estimator was studied in the context of the Bayes cost method.
Using Bregman distances, proper convex Bayes cost functions were introduced for which the MAP estimator is the Bayes estimator.
Here, we generalize these results to the infinite-dimensional setting. Moreover, we discuss the implications of our results for some examples of prior models such as the Besov prior and hierarchical prior.
\end{abstract}

\maketitle

\tableofcontents

\section{Introduction}

Bayesian inversion recasts inverse problems in the form of a statistical quest for information.
From the Bayesian perspective, the solution to an inverse problem is the probability distribution of the unknown when all information available has been incorporated into the model \cite{KS}. 
This solution, called the \emph{posterior distribution}, describes our best understanding of what are the more and less probable values of the unknown.

The drawback of the Bayesian method, especially for high-dimensional problems, is the challenge to represent and process the information encoded in the posterior. What is a good representative of the posterior distribution?
Moreover, how does this information change if the discretization of the problem is refined?

A widely used approach in the inverse problem literature is to consider either the maximum a posteriori (MAP) or the conditional mean (CM) estimate as the ultimate representative. However, the topic is much debated and it is currently unclear under what conditions a high-dimensional non-Gaussian posterior distribution is well characterized by either estimator \cite{LS04}.

In the scheme of Bayes cost formalism, the MAP estimate is often discredited for being only asymptotically a Bayes estimator
for the uniform cost function.
Recent work by the authors \cite{BL14} sheds new light on this topic by introducing proper convex Bayes cost functions for which the MAP estimator is the Bayes estimator. This result, utilizing so-called Bregman distances, indicates that the MAP estimate can provide better representation properties for common non-Gaussian posteriors originating e.g. from sparsity priors \cite{KLNS12}.
Unfortunately, the techniques used in \cite{BL14} are limited to finite-dimensional problems. The infinite-dimensional generalization is challenging due to the fact that it is not well-understood how the definition of a MAP estimate connects to the standard variational formulation in non-Gaussian problems. 

The lack of infinite-dimensional theory on the MAP estimate is problematic also from another perspective. Namely,
it is currently not known when the MAP estimates are discretization invariant \cite{LSS09}. In other words,
if the discretization of the unknown is refined, does the finite-dimensional MAP estimates converge to the infinite-dimensional counter-part? \correction{The answer is known to be negative in some cases \cite{LS04}. From practical point of view, it would be important to distinguish and better understand such cases.} A variational characterization of the limiting infinite-dimensional estimate could provide more insight into this problem.

For Gaussian prior and noise distributions this issue was solved in \cite{Dashti13} by utilizing the theory of small ball probabilities. In this paper, we improve the situation in the non-Gaussian setting by introducing new means to characterize the MAP estimate. This approach stems from differentiability calculus developed for measures by Fomin in late 1960s \cite{Fomin68,Fomin70}. In the core of our method is an interesting connection between the differentiability and quasi-invariance of measures originally discovered by Skorohod (see \cite{Skorohod}).
This theory relates so-called logarithmic derivative of probability measures and the generalized Onsager--Machlup functional. Here, such a connection is fundamental since the logarithmic derivative is directly connected to the variational formalism via its zero points (interpreted in suitable sense), whereas the Onsager--Machlup functional is related to the topological definition introduced in \cite{Dashti13}.
With the help of these tools we are able to introduce a weak formulation of the MAP estimate that builds
the much needed bridge between the two formalisms. Moreover, our work generalizes the results shown in \cite{BL14} related to the Bayes cost method. We want to point out that the possibility of studying zero points of the logarithmic derivative has already been discussed in \cite{Hegland}. 

Let us mention that the asymptotics of the posterior was first considered using total variation (TV) priors in \cite{LS04}. This inspirational paper illustrated how the TV prior can asymptotically lose its edge-preserving property. Likewise, the conditions for convergence regarding the MAP and CM estimates were shown to be inconsistent. It was in this paper where the concept of discretization invariance was first coined for non-Gaussian priors, continuing a line of research starting in \cite{lehtinen1989linear}. Similar inconsistency was also studied in the framework of hierarchical priors in the earlier work by the authors \cite{Helin09,HL11}. The concept of discretization invariance has later been refined in \cite{Helin09,LSS09}. Notice that if the prior distribution converges weakly, the posterior can be shown to converge weakly in general settings \cite{Lasanen1,Lasanen2}.
Moreover, Bregman distance in connection to Bayesian models has been considered earlier in \cite{Frigyik08, Goh14}.
For an excellent overview of infinite-dimensional Bayesian inverse problems, see \cite{Stuart,Lasanen1}.

This paper is organized as follows. In Section \ref{sec:prelim} we discuss the problem setting and main tools regarding Fomin differential calculus. 
Section \ref{sec:onsager_and_map} covers essential results on the Onsager--Machlup functional in infinite-dimensional spaces. Moreover, a weak definition of the MAP estimate is given and its connection to earlier definition in \cite{Dashti13} is studied.
Variational characterization of the weak MAP estimate is described in Section \ref{sec:variational}, where we generalize the work in \cite{BL14}.
Finally, in Sections \ref{sec:besov} and \ref{sec:hierarchical} we illustrate what our results imply for the Besov prior and the hierarchical prior, respectively.

\section{Preliminaries}
\label{sec:prelim}
%

We consider the inverse problem of solving a linear equation 
\begin{equation}
	\label{eq:inv_problem}
	m = Au + e,
\end{equation}
for the unknown $u$ given the measurement $m\in\R^M$.
Above, the unknown $u$ belongs to a separable Banach space $X$, the operator $A : X \to \R^M$ is linear and $e \in \R^M$ models the noise. In the Bayesian paradigm the unknown in \eqref{eq:inv_problem}
is modelled by a random variable. The task is to estimate the conditional distribution
of $u$ given the measurement, i.e. a sample of $m$. Note that $m$ and $A$ are usually to be interpreted as discretizations of a random variable on an infinite-dimensional space $Y$ and a linear operator mapping to $Y$, respectively. Since we are mainly interested in aspects of priors in infinite-dimensional spaces we do not carry out the limit $M\rightarrow \infty$ in the non-Gaussian setting of this paper, but leave it as a relevant question for future research. 

It is well-known that the solution to this problem is achieved via the Bayes formula. Let us assume that $\lambda$ is the prior probability distribution of $u$ and $e$ is normally i.i.d. vector. Then the conditional distribution $\mu$ of $u$ given $m$ satisfies
\begin{equation}
	\label{eq:bayes}
	\mu({\mathcal A} \; | \; m) = \frac{1}{G} \int_{{\mathcal A}} \pi(m \; | \; u) \lambda(du),
\end{equation}
for almost every $m \in \R^M$, where ${\mathcal A} \in {\mathcal B}(X)$, $\pi(m \; | \; u)$ is the likelihood density and $G = \int_{\R^M} \pi(m \; | \; u) \lambda(du)$ is the normalizing constant \cite{Schervish}. 
\begin{remark}
We point out that in Section \ref{sec:onsager_and_map} the posterior distribution in equation \eqref{eq:bayes} is considered from general perspective. Afterwards,
for the rest of the paper we assume a Gaussian likelihood, i.e.,
\begin{equation}
	\label{eq:gaussian_like}
	\pi(m \; | \; u) \propto \exp\left(-\frac 12|Au-m|^2\right).
\end{equation}
In fact, also there the argument could be generalized since we use only properties including the differentiability, boundedness and log-concavity of $\pi(m\;|\;u)$.
However, arbitrary likelihood density would require additional consideration in the Bayes cost method in Section \ref{sec:variational} and for simplicity we restrict ourselves to this particular case.
\end{remark}

The following concept originating to papers by Fomin in the 1960s \cite{Fomin68,Fomin70} is the crux of this paper.
A good overview of the Fomin calculus is given in \cite{Boga10}.
\begin{definition}
A measure $\mu$ on $X$ is called Fomin differentiable along the vector $h$ if, for every set ${\mathcal A}\in {\mathcal B}(X)$, there exists a finite limit
\begin{equation}
	\label{eq:dhmu_def}
	d_h\mu({\mathcal A}) = \lim_{t\to 0} \frac{\mu({\mathcal A}+th)-\mu({\mathcal A})}{t}
\end{equation}
\end{definition}

The set function $d_h \mu$ defined by \eqref{eq:dhmu_def} can be written as a pointwise limit of the sequence of measures ${\mathcal A} \mapsto n\left(\mu({\mathcal A}+n^{-1}h)-\mu({\mathcal A})\right)$. Therefore,
by the Nikodym theorem it is a countably additive signed measure on ${\mathcal B}(X)$ and has bounded variation \cite{Boga10}.

We denote the domain of differentiability by
\begin{equation}
	D(\mu) = \{h \in X \; | \; \mu \textrm{ is Fomin differentiable along } h\}
\end{equation}
If $\mu$ is a probability measure and $h\in D(\mu)$, then the function $f(t) = \mu({\mathcal A} + th)$
is a non-negative differentiable function. Clearly, if $\mu({\mathcal A}) = f(0) = 0$ then we must also have $f'(0) = 0$.
In consequence, $d_h \mu$ is absolutely continuous with respect to $\mu$.

\begin{definition}
The Radon--Nikodym density of the measure $d_h\mu$ with respect to $\mu$ is denoted
by $\beta^\mu_h$ and is called the logarithmic derivative of $\mu$ along $h$.
\end{definition}

Consequently, for all ${\mathcal A}\in {\mathcal B}(X)$ the logarithmic gradient $\beta^\mu_h$ satisfies
\begin{equation}
	d_h\mu({\mathcal A}) = \int_{\mathcal A} \beta^\mu_h(u) \mu(du)
\end{equation}
and, in particular, we have $d_h\mu(X) = 0$
for any $h\in D(\mu)$ by definition. Moreover, $\beta_{sh}^\mu = s \cdot \beta_h^\mu$ for any $s\in\R$.
It turns out that the space $D(\mu)$ with the norm $\norm{h} = \norm{\beta^\mu_h}_{L^1(\mu)}$ is a Banach space compactly embedded into $X$ \cite[Thm 5.1.1.]{Boga10}. 

Let us include an equivalent formulation of Fomin differentiability. We denote by ${\mathcal FC^\infty}(X)$ the collection of all smooth  cylindrical functions $f$ on $X$, that is, $f$ is of the form
\begin{equation*}
	f(u) = \phi(\ell_1(u),...,\ell_n(u)), \quad \phi \in C^\infty_0(\R^n), \quad \ell_i \in X^*, n\in \N.
\end{equation*}
Using such test functions, Fomin differentiability can be expressed in the following weak sense.
\begin{proposition}
A Radon measure $\mu$ on $X$ is differentiable along a vector $h\in X$ in the sense of Fomin if and only if there exists a function $\beta^\mu_h \in L^1(\mu)$ such that for all $f \in {\mathcal FC^\infty}(X)$,
the following integration by parts formula holds
\begin{equation}
	\label{eq:weak_fomin}
	\int_X \partial_h f(u) \mu(du) = - \int_X f(u) \beta^\mu_h(u) \mu(du).
\end{equation}
The function $\beta^\mu_h$ is called the logarithmic derivative of the measure $\mu$ along $h$.
\end{proposition}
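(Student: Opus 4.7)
The plan is to route both characterizations through the one-parameter family $g_f(t):=\int_X f(u+th)\,\mu(du)$, noting that the two formulations differ only in their class of test functions. For an indicator $f=1_{\mathcal A}$ one has $g_f(t)=\mu({\mathcal A}-th)$, so the Fomin difference quotient at $t=0$ equals (up to sign) the derivative of $g_f$ at $0$; for smooth cylindrical $f$ the integration-by-parts formula simply asserts $g_f'(0)=\int_X \partial_h f\,d\mu=-\int_X f\,\beta^\mu_h\,d\mu$. Both directions therefore reduce to transferring differentiability at $t=0$ between smooth cylindrical and indicator test functions.

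For the forward direction, assume $\mu$ is Fomin differentiable along $h$, so that (as established just before the proposition) $d_h\mu$ is a bounded signed Borel measure with density $\beta^\mu_h\in L^1(\mu)$. For $f\in{\mathcal FC^\infty}(X)$ the identity $f(u+th)-f(u)=t\int_0^1(\partial_h f)(u+sth)\,ds$ together with dominated convergence yields $g_f'(0)=\int_X \partial_h f\,d\mu$. On the other hand, writing $g_f(t)=\int f\,d((T_{th})_*\mu)$ with $(T_h)_*\mu({\mathcal B})=\mu({\mathcal B}-h)$, the Fomin set-function limit gives $t^{-1}((T_{th})_*\mu-\mu)\to -d_h\mu$ pointwise on ${\mathcal B}(X)$; Nikodym's theorem supplies uniform bounded variation for the approximating signed measures, so $g_f'(0)=-\int_X f\,d(d_h\mu)=-\int_X f\,\beta^\mu_h\,d\mu$. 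Equating the two expressions gives \eqref{eq:weak_fomin}.

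For the reverse direction, assume \eqref{eq:weak_fomin} for every cylindrical $f$. Replacing $f$ by its translate $u\mapsto f(u+sh)$, which again lies in ${\mathcal FC^\infty}(X)$, and applying the hypothesis shows that $g_f$ is differentiable at every $s$ with $g_f'(s)=-\int_X f(u+sh)\,\beta^\mu_h(u)\,\mu(du)$. Integrating in $s$ and applying Fubini yields
\begin{equation*}
\int_X f(u+th)\,\mu(du) - \int_X f(u)\,\mu(du) = -\int_0^t \int_X f(u+sh)\,\beta^\mu_h(u)\,\mu(du)\,ds,
\end{equation*}
valid for all $f\in{\mathcal FC^\infty}(X)$. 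The class of bounded Borel $f$ satisfying this identity is closed under bounded pointwise convergence — by dominated convergence with integrable bound $\|f\|_\infty\,|\beta^\mu_h|$ — and since ${\mathcal FC^\infty}(X)$ generates ${\mathcal B}(X)$, a monotone class argument extends the identity to all bounded Borel $f$, in particular to $f=1_{\mathcal A}$. The main obstacle is the final step: upgrading the resulting integral identity $\mu({\mathcal A}+th)-\mu({\mathcal A})=\int_0^t\nu({\mathcal A}+sh)\,ds$ (with $\nu=\beta^\mu_h\mu$) to the pointwise limit at $t=0$ demanded by the Fomin definition, which requires continuity of $s\mapsto\nu({\mathcal A}+sh)$ at $s=0$ for every Borel ${\mathcal A}$. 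The standard fix exploits the Radon property of $\mu$ (and hence of $\nu$) together with an approximation of ${\mathcal A}$ by cylinder sets on which translation continuity holds directly, producing the required pointwise derivative and identifying it as $\int_{\mathcal A}\beta^\mu_h\,d\mu$.
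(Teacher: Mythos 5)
The paper itself gives no proof of this proposition --- it is quoted from Bogachev's monograph on differentiable measures --- so your argument can only be judged on its own terms. Your forward direction is complete and correct: computing $g_f'(0)$ once by dominated convergence on the difference quotient of $f$, and once via setwise convergence of the signed measures $t^{-1}(\mu_{th}-\mu)$ combined with the Nikodym boundedness theorem, is a valid way to obtain \eqref{eq:weak_fomin}. The reverse direction is also sound up to and including the integral identity $\mu({\mathcal A}+th)-\mu({\mathcal A})=\int_0^t\nu({\mathcal A}+sh)\,ds$ with $\nu=\beta^\mu_h\cdot\mu$; the monotone class step works because ${\mathcal FC}^\infty(X)$ is stable under products and generates ${\mathcal B}(X)$ when $X$ is a separable Banach space.

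The genuine gap is the final step, which you correctly identify as the obstacle but do not carry out, and the mechanism you gesture at is not the right one. Translation continuity does \emph{not} ``hold directly'' on cylinder sets: for ${\mathcal A}=\{u:(\ell_1(u),\dots,\ell_n(u))\in B\}$ one has $\nu({\mathcal A}+sh)=\nu_\pi(B+s\pi(h))$ for the pushforward $\nu_\pi$ on $\R^n$, and $s\mapsto\nu_\pi(B+sv)$ is already discontinuous for $\nu_\pi=\delta_0$ and $B=\{0\}$. The step that actually closes the argument is the following chain. First, the integral identity yields the uniform bound $|\mu({\mathcal A}+sh)-\mu({\mathcal A})|\le |s|\,\norm{\beta^\mu_h}_{L^1(\mu)}$ over all Borel ${\mathcal A}$, i.e.\ $t\mapsto\mu_{th}$ is Lipschitz in total variation. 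Second, approximating $1_{\mathcal A}$ in $L^1(\mu)$ by a bounded continuous function (this is where the Radon property enters) and using the total-variation bound to control the approximation error after translation gives $\mu\bigl(({\mathcal A}+sh)\,\triangle\,{\mathcal A}\bigr)\to 0$ as $s\to 0$. Third, since $|\nu|\ll\mu$ with both measures finite, uniform absolute continuity upgrades this to $|\nu|\bigl(({\mathcal A}+sh)\,\triangle\,{\mathcal A}\bigr)\to 0$, hence $s\mapsto\nu({\mathcal A}+sh)$ is continuous at $s=0$ and $t^{-1}\int_0^t\nu({\mathcal A}+sh)\,ds\to\nu({\mathcal A})=\int_{\mathcal A}\beta^\mu_h\,d\mu$. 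Without some version of this argument the integral identity only guarantees differentiability of $t\mapsto\mu({\mathcal A}+th)$ for almost every $t$, whereas the Fomin definition demands the limit precisely at $t=0$ for every Borel set.
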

Notice that equation \eqref{eq:weak_fomin} is sometimes used as the definition of Fomin differentiability \cite{Boga_gaussian,BogaMayer99}. Indeed, when solving the logarithmic derivative, the weak formulation is typically
the natural approach as we will see in the next example. 

\begin{example}
\label{ex:gaussian1}
Let us consider a Gaussian measure $\gamma$ on $(X,{\mathcal B}(X))$. 
Suppose $X$ is a separable Hilbert space and let $T$ be a non-negative self-adjoint Hilbert--Schmidt operator on $X$. Let also $\gamma$ be zero-mean with a covariance operator $T^2$. Then the Cameron--Martin space of $\gamma$ is defined by
\begin{equation*}
	H(\gamma) := T(X), \quad\quad  \langle h_1,h_2\rangle_{H(\gamma)} = \langle T^{-1} h_1, T^{-1} h_2\rangle_X.
\end{equation*}
Now suppose $f\in {\mathcal FC}^\infty(X)$. The Cameron--Martin formula yields that
\begin{equation*}
	\int_X \frac{f(u+th)-f(u)}{t} \gamma(du) = \int_X f(u) \frac{r(t,u)-1}{t} \gamma(du),
\end{equation*}
where
\begin{equation*}
	r(t,u) = \exp\left(t \langle h,u\rangle_{H(\gamma)}- \frac{t^2}2 \norm{h}_{H(\gamma)}^2\right).
\end{equation*}
By taking $t\to 0$ we obtain by the Lebesgue dominated convergence theorem that
\begin{equation*}
	\int_X \partial_h f(u) \gamma(du) = \int_X f(u) \langle h,u\rangle_{H(\gamma)} \gamma(du)
\end{equation*}
for any $h \in H(\gamma)$. Moreover, if $h\notin H(\gamma)$ then it is well-known that
$\gamma$ and translated measure $\gamma(\cdot - th)$ are mutually singular for all $t>0$.
Hence, $\gamma$ is not differentiable along $h$. In consequence, we have
\begin{equation}
	\label{eq:gaussian_derivative}
	\beta^\gamma_h(u) = -\langle h,u\rangle_{H(\gamma)} \quad \textrm{for any} \quad h\in D(\gamma) = H(\gamma).
\end{equation}
\end{example}

\begin{remark}
Notice that in equation \eqref{eq:gaussian_derivative} the values of $u$ need not be in $H(\gamma)$.
In fact, the notation in \eqref{eq:gaussian_derivative} should be understood
as a measurable extension in $L^2(\gamma)$ 
(see Remark 8 in \cite{Lasanen1}). Also, if $\expec \gamma = u_0$, then the logarithmic derivative satisfies
\begin{equation*}
	\beta^\gamma_h(u) = - \langle h, u-u_0\rangle_{H(\gamma)}.
\end{equation*}
for any $h\in D(\gamma) = H(\gamma)$.
\end{remark}

Later we consider implications of general theory in the framework of convex probability measures.
A probability measure $\lambda$ on ${\mathcal B}(X)$ is called convex if, for all sets ${\mathcal A_1},{\mathcal A_2} \subset {\mathcal B}(X)$ and all $t \in [0,1]$, one has
\begin{equation}
\label{eq:def_convexity}
\lambda(t {\mathcal A_1} + (1-t) {\mathcal A_2}) \geq \lambda({\mathcal A_1})^t \lambda({\mathcal A_2})^{1-t}.
\end{equation}
Let us record here a well-known lemma regarding convex measures.
\begin{lemma}\cite[Prop. 4.3.8.]{Boga10}
Let $\lambda$ be a convex Radon measure on a locally convex space $X$ and let $V$ be a continuous convex function with $\exp(-V)\in L^1(\lambda)$. Then the measure $\nu = c \exp(-V) \cdot \lambda$ is convex,
where the number $c$ is a normalizing constant.
\end{lemma}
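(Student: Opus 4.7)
The plan is to reduce the measure-theoretic convexity inequality for $\nu$ to a functional (Prékopa–Leindler type) inequality that characterizes convex Radon measures. The crucial point is that the convexity condition \eqref{eq:def_convexity} for $\lambda$ admits the following equivalent functional form: for all nonnegative Borel functions $f_1, f_2, g$ on $X$ satisfying $g(tu_1+(1-t)u_2)\geq f_1(u_1)^t f_2(u_2)^{1-t}$ for every $u_1,u_2\in X$, one has
\begin{equation*}
\int_X g\,d\lambda \geq \left(\int_X f_1\,d\lambda\right)^t \left(\int_X f_2\,d\lambda\right)^{1-t}.
\end{equation*}
I would take this functional formulation as the starting point (in finite dimensions this is Borell's theorem / Prékopa's inequality; in the infinite-dimensional Radon setting it is obtained by reducing to finite-dimensional projections, exploiting that convex measures are stable under pushforwards by continuous linear maps and using inner regularity).

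With this tool in hand, the rest is a direct construction. Given Borel sets $\mathcal{A}_1,\mathcal{A}_2$ and $t\in[0,1]$, I would set
\begin{equation*}
f_i(u)=\chi_{\mathcal{A}_i}(u)\exp(-V(u)),\qquad g(u)=\chi_{t\mathcal{A}_1+(1-t)\mathcal{A}_2}(u)\exp(-V(u)).
\end{equation*}
When $u_i\in\mathcal{A}_i$, the point $tu_1+(1-t)u_2$ belongs to $t\mathcal{A}_1+(1-t)\mathcal{A}_2$, so the indicator factor equals $1$; simultaneously the convexity of $V$ gives $V(tu_1+(1-t)u_2)\leq tV(u_1)+(1-t)V(u_2)$, which exponentiates to the log-concavity bound $\exp(-V(tu_1+(1-t)u_2))\geq \exp(-V(u_1))^t\exp(-V(u_2))^{1-t}$. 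Combining these two observations yields the pointwise estimate $g(tu_1+(1-t)u_2)\geq f_1(u_1)^t f_2(u_2)^{1-t}$; if $u_1\notin\mathcal{A}_1$ or $u_2\notin\mathcal{A}_2$, the right-hand side vanishes and the inequality is trivial.

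Applying the functional inequality to this triple of functions and multiplying through by the normalizing constant (noting $c = c^t\cdot c^{1-t}$) gives
\begin{equation*}
\nu(t\mathcal{A}_1+(1-t)\mathcal{A}_2)=c\int_X g\,d\lambda \geq \left(c\int_X f_1\,d\lambda\right)^t\left(c\int_X f_2\,d\lambda\right)^{1-t}=\nu(\mathcal{A}_1)^t\nu(\mathcal{A}_2)^{1-t},
\end{equation*}
which is exactly the convexity of $\nu$. The continuity of $V$ is used only to ensure Borel measurability of $\exp(-V)$, and the integrability assumption $\exp(-V)\in L^1(\lambda)$ ensures that $\nu$ is a well-defined probability measure.

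The main obstacle is the functional characterization of convex Radon measures on an infinite-dimensional locally convex space; once this is accepted, the argument above is essentially a two-line computation. In Bogachev's treatment \cite{Boga10} this reduction is handled by approximating $\lambda$ by its images under continuous linear projections onto finite-dimensional subspaces, applying classical Prékopa–Leindler there, and passing to the limit via inner regularity of the Radon measure.
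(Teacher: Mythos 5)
The paper offers no proof of this lemma at all --- it is quoted verbatim from Bogachev \cite[Prop.~4.3.8]{Boga10} --- so there is nothing internal to compare your argument against; I can only assess it on its own terms. Your pointwise verification is correct: for $u_i\in\mathcal{A}_i$ the convexity of $V$ gives exactly the multiplicative bound $g(tu_1+(1-t)u_2)\geq f_1(u_1)^t f_2(u_2)^{1-t}$, and the normalization $c=c^t c^{1-t}$ closes the computation. The honest caveat is that you have shifted the entire content of the lemma into the ``equivalent functional form'' of convexity for Radon measures on a locally convex space, which is Borell's functional inequality --- a theorem at least as deep as the statement being proved, and whose infinite-dimensional proof (finite-dimensional images plus inner regularity, as you sketch) needs some care because a general Borel triple $(f_1,f_2,g)$ does not factor through finite-dimensional projections. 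A more self-contained route, closer to how Borell and Bogachev actually argue, uses only the set-form convexity of $\lambda$ plus the \emph{one-dimensional} Pr\'ekopa--Leindler inequality: writing $\int_{\mathcal{A}}e^{-V}\,d\lambda=\int_{-\infty}^{\infty}\lambda(\mathcal{A}\cap\{V<r\})e^{-r}\,dr$, the inclusion $t(\mathcal{A}_1\cap\{V<r_1\})+(1-t)(\mathcal{A}_2\cap\{V<r_2\})\subset(t\mathcal{A}_1+(1-t)\mathcal{A}_2)\cap\{V<tr_1+(1-t)r_2\}$ (convexity of $V$) together with the convexity of $\lambda$ shows that the integrands satisfy the one-dimensional Pr\'ekopa--Leindler hypothesis in the variable $r$, and the conclusion follows. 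Two minor points either way: the Minkowski combination $t\mathcal{A}_1+(1-t)\mathcal{A}_2$ need not be Borel, so one should first reduce to compact $\mathcal{A}_i$ by inner regularity (then the combination is compact); and continuity of $V$ is used only for measurability, as you say --- mere lower semicontinuity or Borel measurability of a convex $V$ would do.
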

It directly follows that the posterior $\mu$ in \eqref{eq:bayes} given a Gaussian likelihood in \eqref{eq:gaussian_like} is convex.

In Section \ref{sec:besov} we construct an important example called the Besov prior by considering product measures. For the theory developed here, product measures provide a flexible framework as we will see from the following known results.
Suppose that $\mu_n$, $n\geq 1$ is Radon probability measures on a locally convex space $X_n$. Moreover, the dimension of $X_n$ is assumed to be one. Consider the Fomin differentiability of the
product measure $\mu = \otimes_{n=1}^\infty \mu_n$ on the space
$X = \prod_{n=1}^\infty X_n$.

\begin{theorem}\cite[Prop. 4.1.1.]{Boga10}
\label{thm:product_measures_conv}
Suppose that $\beta^{\mu_n}_{h_n}$ is the logarithmic derivative of $\mu_n$ in the direction $h_n\in X_n$. The following three claims are equivalent:
\begin{itemize}
	\item[(i)] $\mu$ is differentiable along $h = (h_j)_{j=1}^n \in X$,
	\item[(ii)] the series $\sum_{n=1}^\infty \beta_{h_n}^{\mu_n}$ converges in the norm of $L^1(\mu)$ and
	\item[(iii)] we have
	\begin{equation*}
		\sup_n \norm{\sum_{j=1}^n \beta^{\mu_j}_{h_j}}_{L^1(\mu)} < \infty.
	\end{equation*}
\end{itemize}
\end{theorem}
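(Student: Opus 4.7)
The plan is to run the cyclic chain (i) $\Rightarrow$ (ii) $\Rightarrow$ (iii) $\Rightarrow$ (i); the middle implication is immediate since convergent sequences in $L^1(\mu)$ are norm-bounded, so the substance lies in the other two. Let $\mathcal{F}_n$ denote the $\sigma$-algebra on $X$ generated by the projection onto the first $n$ factors and set $S_n := \sum_{j=1}^n \beta^{\mu_j}_{h_j}$. One-dimensional Fomin differentiability of each $\mu_j$ combined with Fubini's theorem shows that the finite product $\mu_1 \otimes \cdots \otimes \mu_n$ is Fomin differentiable along $(h_1,\ldots,h_n)$ with logarithmic derivative $S_n$. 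Because each $\beta^{\mu_j}_{h_j}$ depends only on the $j$-th coordinate and has $\mu_j$-mean zero (as $d_{h_j}\mu_j(X_j) = 0$), the summands are independent and centered under the product measure $\mu$; hence $(S_n)$ is an $(\mathcal{F}_n)$-martingale.

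For (i) $\Rightarrow$ (ii), I would write $\beta := \beta^\mu_h$ and test the weak integration by parts identity \eqref{eq:weak_fomin} against any cylindrical $f \in {\mathcal FC^\infty}(X)$ depending on the first $n$ coordinates only, so that $\partial_h f = \partial_{h^{(n)}} f$ with $h^{(n)} := (h_1,\ldots,h_n,0,\ldots)$. On one side the identity evaluates $\beta$, and on the other a finite-dimensional IBP on the partial product evaluates $S_n$, giving $\int_X f \beta\, d\mu = \int_X f S_n\, d\mu$ for all such $f$. This forces $S_n = \expec_\mu[\beta \mid \mathcal{F}_n]$, and Doob's convergence theorem for uniformly integrable martingales yields $S_n \to \beta$ in $L^1(\mu)$.

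For (iii) $\Rightarrow$ (i), under $L^1$-boundedness the martingale $(S_n)$ converges $\mu$-almost surely to some $S_\infty \in L^1(\mu)$ by Doob and Fatou. I would then upgrade the a.s.\ convergence to $L^1$-convergence by exploiting the independence and centering of the increments: a symmetrization combined with L\'evy's maximal inequality controls $\expec_\mu[\sup_n |S_n|]$, yielding uniform integrability of $(S_n)$. With $L^1$-convergence in hand, $\expec_\mu[S_\infty \mid \mathcal{F}_n] = S_n$, and plugging this into the finite-dimensional IBP identity $\int_X \partial_h f\, d\mu = -\int_X f S_n\, d\mu$ (valid for cylindrical $f$ whose depth does not exceed $n$) produces the weak Fomin identity for $\mu$ along $h$ with $\beta^\mu_h = S_\infty$.

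The main obstacle I foresee is exactly this last upgrade from $L^1$-boundedness to $L^1$-convergence of the martingale, since a generic $L^1$-bounded martingale need not converge in $L^1$. The product-measure structure rescues the step: it is the independence and centering of the martingale differences that makes the symmetrization and maximal-inequality route available and converts mere $L^1$-boundedness into uniform integrability, so that the weak IBP identity can be closed at the limit to identify $\beta^\mu_h = S_\infty$.
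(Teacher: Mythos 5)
The paper does not prove this statement at all --- it is quoted verbatim from Bogachev \cite[Prop.~4.1.1]{Boga10} --- so there is no in-paper argument to compare against. Your proposal is correct and is essentially the standard (Bogachev-style) proof: the identification $S_n=\expec_\mu[\beta^\mu_h\mid\mathcal F_n]$ via finite-dimensional integration by parts handles (i)$\Rightarrow$(ii), and you correctly isolate the only delicate point in (iii)$\Rightarrow$(i), namely that a generic $L^1$-bounded martingale need not converge in $L^1$; since the increments $\beta^{\mu_j}_{h_j}$ are independent and centered under the product measure, symmetrization plus L\'evy's maximal inequality gives $\expec_\mu\bigl[\sup_n|S_n|\bigr]\le 4\sup_n\norm{S_n}_{L^1(\mu)}$, which yields the uniform integrability you need, after which the weak identity \eqref{eq:weak_fomin} closes at the limit because $\int_X fS_n\,d\mu=\int_X fS_m\,d\mu$ for every cylindrical $f$ of depth $m\le n$.
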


Let us introduce the following subspace
\begin{equation*}
	H(\mu) = \{h \in D(\mu) \; | \; \beta^\mu_h \in L^2(\mu)\} \subset D(\mu),
\end{equation*}
which has a natural Hilbert space structure \cite[Section 5]{Boga10}.
Surprisingly, for a large class of product measures $H(\mu)$ coincides with $D(\mu)$.
For the following corollary, see \cite[Cor. 2, page 43]{BS90} and Example 5.2.3 in \cite{Boga10}.

\begin{corollary}
\label{cor:prod_measures}
Suppose $m$ is a Borel probability measure on the real line such that 
\begin{equation*}
	\int_\R \frac{m'(t)^2}{m(t)} dt < \infty,
\end{equation*}
that is, $m$ has a finite Fisher information. If we have
$\mu_n(A) = m(A/a_n)$, where $a_n>0$, and $\mu = \otimes_{n=1}^\infty \mu_n$, then it follows that
\begin{equation*}
	D(\mu) = H(\mu) = \left\{h \in \R^\infty \; \left| \; \sum_{n=1}^\infty a_n^{-2} h_n^2 < \infty \right\}.\right.
\end{equation*}
\end{corollary}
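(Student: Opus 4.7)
The plan is to reduce the corollary to an application of Theorem \ref{thm:product_measures_conv} by computing the logarithmic derivatives of the one-dimensional factors $\mu_n$ explicitly and then analysing the $L^p(\mu)$-behaviour of their partial sums. First I would observe that the finiteness of the Fisher information forces $m$ to be absolutely continuous with respect to Lebesgue measure (still denoting the density by $m$), so its logarithmic derivative is $\beta_h^m(t) = h\, m'(t)/m(t)$ with $\norm{\beta_h^m}_{L^2(m)}^2 = h^2 I(m)$ for $I(m) := \int_\R m'(t)^2/m(t)\,dt$. A change of variables then yields, for $\mu_n(A) = m(A/a_n)$,
\begin{equation*}
	\beta_{h_n}^{\mu_n}(t) \;=\; \frac{h_n}{a_n}\,\frac{m'(t/a_n)}{m(t/a_n)}, \qquad \norm{\beta_{h_n}^{\mu_n}}_{L^2(\mu_n)}^2 \;=\; \frac{h_n^2}{a_n^2}\,I(m),
\end{equation*}
and on $(\R^\infty,\mu)$ the sequence $\beta_{h_n}^{\mu_n}$ is independent, centred (from $d_{h_n}\mu_n(X_n)=0$), and identically distributed up to the deterministic scaling $h_n/a_n$.

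For the inclusion $\{h : \sum h_n^2/a_n^2 < \infty\} \subseteq H(\mu)$, orthogonality of centred independent summands in $L^2(\mu)$ gives $\norm{S_N}_{L^2(\mu)}^2 = I(m)\sum_{j=1}^N h_j^2/a_j^2$ for $S_N := \sum_{j=1}^N \beta_{h_j}^{\mu_j}$, so the partial sums are Cauchy in $L^2(\mu)\subseteq L^1(\mu)$. Theorem \ref{thm:product_measures_conv} then yields $h\in D(\mu)$ with $\beta_h^\mu \in L^2(\mu)$, hence $h \in H(\mu)$.

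For the reverse inclusion $D(\mu) \subseteq \{h : \sum h_n^2/a_n^2 <\infty\}$, Theorem \ref{thm:product_measures_conv}(iii) only provides $\sup_N \norm{S_N}_{L^1(\mu)} < \infty$, and one has to upgrade this to $L^2$-boundedness before invoking the variance identity above. I would symmetrise: taking an independent copy $u'$ of $u\sim\mu$ and setting $T_N(u,u') := S_N(u) - S_N(u')$, the centering of each summand yields $\norm{S_N}_{L^1(\mu)} \leq \norm{T_N}_{L^1(\mu\otimes\mu)} \leq 2\norm{S_N}_{L^1(\mu)}$, while $T_N$ is a sum of independent symmetric real random variables of finite variance, so Kahane's inequality produces a universal constant $C$ with $\norm{T_N}_{L^2} \leq C \norm{T_N}_{L^1}$; combined, this gives uniform $L^2(\mu)$-boundedness of $S_N$ and hence $\sum_j h_j^2/a_j^2 < \infty$. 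Together with the trivial inclusion $H(\mu) \subseteq D(\mu)$, the triple equality follows.

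The main obstacle is precisely the $L^1 \to L^2$ upgrade: Theorem \ref{thm:product_measures_conv} is sharp for general product measures, so without the identically-distributed structure induced by the scaling $\mu_n = m(\cdot/a_n)$ one cannot in general promote $L^1$-boundedness of $S_N$ to $L^2$-boundedness. The Khintchine--Kahane comparison is the tool that exploits this structure; alternatively one may simply cite \cite[Cor.\ 2, p.\ 43]{BS90} or \cite[Ex.\ 5.2.3]{Boga10} at this step.
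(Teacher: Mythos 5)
Your sufficiency direction ($\sum a_n^{-2}h_n^2<\infty \Rightarrow h\in H(\mu)$) is essentially the paper's argument: compute $\beta^{\mu_n}_{h_n}$ by scaling, use zero mean and independence to get $\norm{S_N}_{L^2(\mu)}^2 = I(m)\sum_{j\le N}a_j^{-2}h_j^2$, and invoke Theorem \ref{thm:product_measures_conv} (you use (ii) via $L^2$-Cauchyness; the paper uses (iii) plus a weak-$L^2$ subsequence argument — yours is marginally cleaner). For the necessity direction you take a genuinely different route. The paper passes to characteristic functions: $L^1$-convergence of the series of independent centred summands forces convergence of the product $\prod\phi_1(a_n^{-1}h_n t)$, hence $\sum|1-\phi_n(t)|<\infty$, and the lower bound $|1-\phi_1(t)|\ge\delta t^2$ near zero (from the finite, nonzero second moment) yields $\sum a_n^{-2}h_n^2<\infty$. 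You instead symmetrize and try to upgrade the $L^1$-bound from Theorem \ref{thm:product_measures_conv}(iii) to an $L^2$-bound. Both strategies exploit the same structural fact (the summands are identically distributed up to the deterministic scaling $h_n/a_n$); the paper's version avoids any moment-comparison inequality at the price of introducing Fourier-analytic machinery.

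There is, however, one step in your necessity argument whose justification as written is not valid: ``Kahane's inequality produces a universal constant $C$ with $\norm{T_N}_{L^2}\le C\norm{T_N}_{L^1}$'' for sums of independent symmetric real random variables of finite variance. No such universal constant exists — a single symmetric summand taking values $\pm M$ with probability $(2M^2)^{-1}$ and $0$ otherwise has $\norm{\cdot}_{L^2}/\norm{\cdot}_{L^1}=M$. The Khintchine--Kahane equivalence of moments requires Rademacher (or at least uniformly subgaussian) coefficients. The step can be repaired using exactly the structure you point to in your last paragraph: write $T_N=\sum_j c_jZ_j$ with $c_j=h_j/a_j$ and $Z_j$ i.i.d.\ symmetric, condition on the moduli $|Z_j|$ and apply the classical Khintchine inequality to the signs to get $\norm{T_N}_{L^1}\gtrsim \expec\bigl(\sum_j c_j^2Z_j^2\bigr)^{1/2}$, and then use a truncation argument (choose $M$ with $\expec[Z_1^2\wedge M^2]\ge\tfrac12\expec Z_1^2$ and bound $\sqrt{x}\ge x/\sqrt{a}$ on $[0,a]$) to obtain $\expec\bigl(\sum_j c_j^2Z_j^2\bigr)^{1/2}\ge c\bigl(\sum_j c_j^2\bigr)^{1/2}$ with $c$ depending only on the law of $Z_1$, i.e.\ only on $m$. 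With that repair your proof is complete; alternatively, the paper's characteristic-function argument bypasses the issue entirely.
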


\begin{proof}
From the definition we deduce that 
$\beta^{\mu_n}_{h} = a_n^{-1} \beta^{m}_{h}$ for any $h \in \R$.
Consequently, for any vector $h=(h_1,...,h_n,0,...) \in \R^\infty$ we have
\begin{equation*}
	\beta^\mu_h (u) = \frac{h_1}{a_1} \beta^{m}_{1}(u_1) + ... + \frac{h_n}{a_n} \beta^{m}_{1}(u_n)
\end{equation*}
where $u = (u_j)_{j=1}^\infty \in \R^\infty$, and
\begin{equation}
	\label{eq:beta_l2_bdedness}
	\norm{\beta^\mu_h}^2_{L^2(\mu)} = \left( \sum_{j=1}^n a_j^{-2} h_j^2 \right) \norm{\beta^{m}_{1}}^2_{L^2(\mu_1)}.
\end{equation}
Now if
$h=(h_j)_{j=1}^\infty$ such that $\sum_{j=1}^n a_j^{-2} h_j^2 < \infty$, then 
by Theorem \ref{thm:product_measures_conv} (iii) we have $h\in D(\mu)$. In addition, the series $\sum_{n=1}^N \beta_{h_n}^{\mu_n}$ converges to $\beta^\mu_h$ in $L^1(\mu)$ and by boundedness in equation \eqref{eq:beta_l2_bdedness} we have a subsequence that converges weakly in $L^2(\mu)$. Since the limit is unique, we have $\beta^\mu_h \in L^2(\mu)$ and, consequently, $h\in H(\mu)$.

Suppose now that $h \in D(\mu)$. Consider $\xi_n(u) = \beta^m_1(u_n)$ as 
a random variable $\xi_n : (X,{\mathcal B}(X), \mu) \to (\R, {\mathcal B}(\R))$. It follows that $\xi_n$ are independent, have zero mean and finite
second moment. In consequence, the characteristic functional $\phi_n$ of the random variable $\xi_n$ is twice differentiable at zero and there exists $\delta > 0$ such that
\begin{equation}
\label{eq:simple_char_estimate}
|1-\phi_1(t)|\geq \delta t^2
\end{equation}
in some neighbourhood of zero. Note carefully that $\phi_n(t) = \phi_1(\frac{h_n}{a_n}t)$.
Next, Theorem \ref{thm:product_measures_conv} yields the convergence of 
the series $\sum_{n=1}^\infty \beta^{\mu_n}_{h_n}$ in $L^1(\mu)$ and, similarly,
the mean convergence of the series $\sum_{n=1}^\infty \xi_n$. Together with the independence of $\xi_n$ we obtain the convergence of product $$\prod_{n=1}^\infty \phi_n\left(t\right)=\prod_{n=1}^\infty \phi_1\left(\frac{h_n}{a_n} t\right)< \infty.$$ Now it follows that the series $\sum_{n=1}^\infty |1-\phi_n(t)|$ must also be bounded
and by the estimate \eqref{eq:simple_char_estimate}
we have $\sum_{j=1}^n a_j^{-2} h_j^2 < \infty$. 
\end{proof}

\section{MAP Estimates from Small Balls and Translations}
\label{sec:onsager_and_map}

In this section we consider translated measures $\mu_h$, where 
$$\mu_h({\mathcal A}) = \mu({\mathcal A}-h)$$ for any ${\mathcal A} \in {\mathcal B}(X)$,
and work closely with the Radon--Nikodym derivative of $\mu_h$ with respect to $\mu$.
The measure $\mu$ is called quasi-invariant along the vector $h$ if $\mu_h$ is absolutely continuous with respect to $\mu$.
Also, recall that the support of $\mu$ is defined in the following way: if $x\in {\rm supp}(\mu)$ then every open neighbourhood of $x\in X$ has a non-zero measure. In what follows, we always assume without further mention that ${\rm supp}(\mu) = X$, i.e., $\mu$ has full support. In this manner we simplify the argumentation in e.g. next lemma (avoiding any division by zero).
Moreover, we make the following fundamental assumption on the measure $\mu$ appearing below:
\begin{itemize}
	\item[(A1)] there exists a separable Banach space $E \subset D(\mu)$ such that $E$ is topologically dense in $X$ and $\beta^\mu_h \in C(X)$ for any $h\in E$, i.e. $\beta^\mu_h$ has a continuous representative.
\end{itemize}
From this point on, whenever we write $h\in E$, the notation $\beta^\mu_h$ stands for the continuous representative.
As we illustrate below, assumption (A1) is satisfied for many typical prior distributions used in Bayesian inversion. 
Notice that (A1) also implies the topological density of $D(\mu)$ in $X$. 
The authors are not aware of general conditions for which the density of $D(\mu)$ would be guaranteed. The motivation
behind (A1) is given in the following simple lemma that connects the asymptotics of small ball probabilities to
the values of the continuous representative.
\begin{lemma}
\label{lem:cont_repres}
Assume that $\mu$ is quasi-invariant along the vector $h$. Denote the Radon--Nikodym derivative of $\mu_h$ with respect to $\mu$ by $r_h \in L^1(\mu)$. Suppose $r_h$ has a continuous representative $\tilde r_h \in C(X)$, i.e., $r_h - \tilde r_h = 0$ in $L^1(\mu)$. Then it holds that
\begin{equation*}
	\lim_{\epsilon \to 0} \frac{\mu_h(B_\epsilon(u))}{\mu(B_\epsilon(u))} = \tilde r_h(u)
\end{equation*}
for any $u \in X$.
\end{lemma}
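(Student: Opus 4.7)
The plan is to rewrite the ratio as a $\mu$-average of $\tilde r_h$ over the ball $B_\epsilon(u)$ and then invoke continuity of $\tilde r_h$.

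First I would use quasi-invariance to write
$$\mu_h(B_\epsilon(u)) = \int_{B_\epsilon(u)} r_h(v)\,\mu(dv),$$
and then replace $r_h$ by its continuous representative $\tilde r_h$. Since $r_h = \tilde r_h$ in $L^1(\mu)$, they agree $\mu$-almost everywhere, so the integral over $B_\epsilon(u)$ is unchanged:
$$\mu_h(B_\epsilon(u)) = \int_{B_\epsilon(u)} \tilde r_h(v)\,\mu(dv).$$
The full-support assumption guarantees $\mu(B_\epsilon(u)) > 0$ for every $\epsilon > 0$, so the quotient
$$\frac{\mu_h(B_\epsilon(u))}{\mu(B_\epsilon(u))} = \frac{1}{\mu(B_\epsilon(u))} \int_{B_\epsilon(u)} \tilde r_h(v)\,\mu(dv)$$
is well defined.

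Next I would estimate this $\mu$-average against $\tilde r_h(u)$ using continuity. Given $\delta>0$, continuity of $\tilde r_h$ at $u$ provides $\epsilon_0>0$ such that $|\tilde r_h(v) - \tilde r_h(u)| < \delta$ whenever $v \in B_{\epsilon_0}(u)$. Then for $\epsilon < \epsilon_0$,
$$\left| \frac{1}{\mu(B_\epsilon(u))} \int_{B_\epsilon(u)} \tilde r_h(v)\,\mu(dv) - \tilde r_h(u)\right| \leq \frac{1}{\mu(B_\epsilon(u))} \int_{B_\epsilon(u)} |\tilde r_h(v) - \tilde r_h(u)|\,\mu(dv) < \delta,$$
which gives the claimed limit as $\epsilon \to 0$.

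There is no real obstacle here; the only subtle point is the legitimacy of replacing $r_h$ by $\tilde r_h$ inside the integral (justified because they agree outside a $\mu$-null set, and $\mu(B_\epsilon(u))$ is finite), together with the non-vanishing of $\mu(B_\epsilon(u))$, which is exactly what the standing full-support assumption ensures. Everything else is the elementary fact that averages of a continuous function over shrinking balls converge to its pointwise value.
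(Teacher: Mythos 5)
Your proof is correct and follows essentially the same route as the paper: both rewrite $\mu_h(B_\epsilon(u))$ as $\int_{B_\epsilon(u)}\tilde r_h\,d\mu$ using the a.e.\ identification of $r_h$ with $\tilde r_h$, and then use continuity of $\tilde r_h$ at $u$ to show the resulting $\mu$-average over shrinking balls converges to $\tilde r_h(u)$. The only cosmetic difference is that the paper sandwiches the quotient between $\min$ and $\max$ of $\tilde r_h$ over the ball, whereas you give the equivalent $\delta$-estimate directly; both hinge on the same full-support assumption to keep $\mu(B_\epsilon(u))>0$.
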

\begin{proof}
By definition we have
\begin{equation*}
	\mu_h(B_\epsilon(u)) = \int_{B_\epsilon(u)} r_h(y) \mu(dy) = \int_{B_\epsilon(u)} \tilde r_h(y) \mu(dy)
\end{equation*}
for any $\epsilon>0$ and $u\in X$. It directly follows that
\begin{equation*}
	\min_{v \in B_\epsilon(u)} \tilde r_h(v) \leq \frac{\mu_h(B_\epsilon(u))}{\mu(B_\epsilon(u))} \leq \max_{v \in B_\epsilon(u)} \tilde r_h(v)
\end{equation*}
and the continuity yields the claim.
\end{proof}

The next proposition is a central tool that enables us to study non-Gaussian distributions from the
perspective of small ball probabilities.

\begin{proposition}\cite[Prop. 6.4.1]{Boga10}
\label{prop:onsager}
Suppose $\mu$ is a Radon measure on a locally convex space $X$ and is Fomin differentiable along a vector $h\in X$.
\correction{If it holds that $\exp(\epsilon |\beta^\mu_h(\cdot)|) \in L^1(\mu)$ for some $\epsilon>0$, then 
$\mu$ is quasi-invariant along $h$} and the Radon--Nikodym density $r_h$ of $\mu_h$ with respect to $\mu$
satisfies the equality
\begin{equation}
	\label{eq:general_onsager}
	r_h(u) = \exp\left(\int_0^1 \beta^\mu_h(u-sh)ds\right) \quad {\rm in }\; L^1(\mu).
\end{equation}
\end{proposition}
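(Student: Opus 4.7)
The plan is to derive \eqref{eq:general_onsager} by exhibiting both sides as the unique solution of a common transport-type ODE in the translation parameter $t$. For $t\in[0,1]$ set $F_t(u):=\int_0^t \beta^\mu_h(u-sh)\,ds$ and $\rho_t(u):=\exp(F_t(u))$; the goal is $\rho_t\,\mu = \mu_{th}$ for every $t$, and the proposition will follow by specialising to $t=1$. The hypothesis $\exp(\epsilon|\beta^\mu_h|)\in L^1(\mu)$, combined with Jensen's inequality and Fubini, yields an $L^1(\mu)$ bound on $\rho_t$ for $t$ in an initial interval $[0,t_0]$ with $t_0$ of order $\epsilon$; a bootstrap exploiting the group property $\mu_{(s+t)h}=(\mu_{sh})_{th}$ then propagates both integrability and the forthcoming identity to all of $[0,1]$ in finitely many steps.

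The main step is a weak-form uniqueness argument. Since ${\mathcal FC^\infty}(X)$ determines Radon measures on $\mathcal{B}(X)$, it suffices to check
$$\int_X f(u)\rho_t(u)\,d\mu(u) = \int_X f(u+th)\,d\mu(u)$$
for every $f\in{\mathcal FC^\infty}(X)$. Define $\psi_f(t)$ as the difference of the two sides; then $\psi_f(0)=0$, and the task reduces to showing $\psi_f'(t)\equiv 0$. Differentiation under the integral is justified by the exponential moment bound and the pointwise identity $\partial_t\rho_t=\beta^\mu_h(\,\cdot-th)\,\rho_t$: the first term differentiates to $\int_X f(u)\beta^\mu_h(u-th)\rho_t(u)\,d\mu(u)$, while the smoothness of $f$ gives $\int_X (\partial_h f)(u+th)\,d\mu(u)$ for the second. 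Under the bootstrap hypothesis $\rho_t\mu=\mu_{th}$ at the current value of $t$, a change of variable $u\mapsto u-th$ followed by the weak Fomin identity \eqref{eq:weak_fomin} matches the two expressions and yields $\psi_f'(t)=0$.

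The main technical obstacle is a chicken-and-egg issue: the integrand $\beta^\mu_h(u-sh)$ is a priori only $\mu$-a.e.\ defined after translation by $-sh$, whereas we integrate it against $\mu$ itself. I would circumvent this by first passing to finite-dimensional quotients of $X$ spanned by cylindrical coordinates containing $h$, where the statement reduces to the classical one-dimensional identity that a strictly positive smooth density is recovered from its logarithmic derivative by exponentiating the antiderivative. The exponential moment hypothesis then produces a uniformly integrable family of finite-dimensional densities, whose limit under the martingale convergence theorem for an increasing filtration of finite-dimensional $\sigma$-algebras delivers the candidate element of $L^1(\mu)$. The cylindrical-function identity of the previous paragraph then pins down this limit $\mu$-a.e.\ as $\exp(F_1)$, yielding simultaneously quasi-invariance of $\mu$ along $h$ and the density formula \eqref{eq:general_onsager}.
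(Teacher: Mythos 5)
The paper offers no proof of this proposition: it is quoted from Bogachev \cite[Prop.~6.4.1]{Boga10}, whose argument disintegrates $\mu$ into conditional measures on the lines $u+\R h$ and works with one-dimensional absolutely continuous densities. Your plan --- reduce to finite-dimensional sections, recover the density there by exponentiating the antiderivative of the logarithmic derivative, and pass to the limit by uniform integrability --- is therefore in the right spirit, but two steps are genuinely gapped. First, the weak-ODE argument is circular as written: you compute $\psi_f'(t)$ \emph{under the hypothesis} $\rho_t\mu=\mu_{th}$ at the current value of $t$, so you only learn that $\psi_f'$ vanishes on the set where $\psi_f$ already vanishes, and that does not force $\psi_f\equiv 0$ (consider $\psi(t)=(t-t_0)_+^2$: it vanishes together with its derivative on $[0,t_0]$ and is nonzero afterwards). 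Closing this requires either a Gronwall-type bound on $\psi_f'(t)$ that is \emph{not} conditioned on the conclusion, or an open-and-closed argument for the set $\{t\in[0,1]:\rho_t\mu=\mu_{th}\}$; neither is supplied, and the ``finitely many steps'' bootstrap only relocates the problem to the first step.

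Second, and more fundamentally, the exponential moment hypothesis must be used to prove that the one-dimensional (conditional) densities are \emph{strictly positive}; you assert this (``a strictly positive smooth density'') but invoke the hypothesis only for $L^1$ bounds on $\rho_t$ and uniform integrability of the finite-dimensional densities. Without positivity the statement is false: on $\R$ the measure $d\mu=c(1-x^2)_+^2\,dx$ is Fomin differentiable along $h=1$ with $\beta^\mu_1(x)=-4x/(1-x^2)$ on $(-1,1)$, yet $\mu_1$ charges sets where $\mu$ vanishes and is not absolutely continuous with respect to $\mu$; the only hypothesis that fails is $\exp(\epsilon|\beta^\mu_1|)\in L^1(\mu)$. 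The heart of the quasi-invariance claim is thus the one-dimensional lemma that exponential integrability of $\rho'/\rho$ against $\rho\,dx$ forbids zeros of $\rho$, and your proof never engages with it. Two further points in the same part of the argument need attention: conditional expectation does not commute with translation by $sh$, so identifying the martingale limit with $\exp\bigl(\int_0^1\beta^\mu_h(u-sh)\,ds\bigr)$ rather than with the limit of the exponentiated conditional expectations requires a separate argument; and the one-dimensional sanity check you propose actually yields $\rho(u-h)/\rho(u)=\exp\bigl(-\int_0^1\beta^\mu_h(u-sh)\,ds\bigr)$ under the paper's convention $\mu_h(\mathcal A)=\mu(\mathcal A-h)$ and $\beta^\mu_h=\partial_h\log\rho$, so the sign in \eqref{eq:general_onsager} is consistent only with the opposite translation convention --- a discrepancy worth resolving before building the infinite-dimensional limit on top of it.
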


As a consequence of assumption (A1), $r_h$ has also a continuous representative if $h\in E$ and in this case
$r_{th}$ is differentiable with respect to $t\in\R$.
Similar to the logarithmic derivative, for any $h\in E$, the notation $r_h$ stands for this particular representative in what follows.
It is worth noticing that the assumption on the expectation of $\exp(\epsilon |\beta^\mu_h(\cdot)|)$ is important and
\correction{we have to assume it for the main results of this and the next section}:
\begin{itemize}
\item[(A2)] for any $h\in E$ there exists $\epsilon>0$ such that the prior probability measure $\lambda$ satisfies $\exp(\epsilon |\beta^\lambda_h(\cdot)|) \in L^1(\lambda)$.
\end{itemize}
Let us consider Proposition \ref{prop:onsager} for the Gaussian example.
\begin{example}
\label{ex:gaussian2}
Suppose $\gamma$ is chosen according to Example \ref{ex:gaussian1} and let us denote the Cameron--Martin space by $H = H(\gamma)$. First, due to the Fernique theorem \cite{Boga_gaussian} we have
\begin{equation*}
	\exp(|\beta^\gamma_h(\cdot)|) \leq \exp(\norm{\cdot}_X \norm{h}_E) \in L^1(\lambda)
\end{equation*}
and hence the assumption (A2)  
is satisfied for the Banach space $E=C X \subset X$.
Second, by equation \eqref{eq:gaussian_derivative} it follows that
\begin{equation*}
	\beta_h^\gamma(u-sh) = - \langle u-sh,h\rangle_H = -\langle u,h\rangle_H + s \norm{h}^2_H.
\end{equation*}
and, in consequence, 
\begin{equation*}
	r_{h}(u) = \exp\left(-\langle u,h\rangle_H + \frac 12 \norm{h}^2_H\right) \in L^1(\mu).
\end{equation*}
This is a classical result that can also be achieved by direct evaluations of the measures of small balls \cite{Lifshits}.
Notice that $r_h$ coincides with the Onsager--Machlup functional given in \cite{Dashti13} if $u\in D(\gamma) = H$.
\end{example}

Now we are ready to discuss the definition of a MAP estimate.
Let us first give the construction introduced in \cite{Dashti13}. Notice that 
we do not consider the questions of existence or uniqueness related to the MAP estimate.

\begin{definition}
\label{def:map}
Let
\begin{equation*}
	M^\epsilon = \sup_{u\in X} \mu(B_\epsilon(u)).
\end{equation*}
Any point $\hat u\in X$ satisfying 
$$\lim_{\epsilon\to 0}\frac{\mu(B_\epsilon(\hat u))}{M^\epsilon} = 1$$ is a MAP
estimate for the measure $\mu$.
\end{definition}

We remark that $\lim_{\epsilon\to 0} \left(\mu(B_\epsilon(u))/M^\epsilon\right) \leq 1$ holds for any $u\in X$.
Let us propose the following weaker characterization of the estimator.

\begin{definition}
\label{def:wmap}
We call a point $\hat u \in X$, $\hat u \in {\rm supp} (\mu)$, a weak MAP (wMAP) estimate if 
\begin{equation}
	\label{eq:wmap}
	r_h(\hat u) = \lim_{\epsilon \to 0} \frac{\mu(B_\epsilon(\hat u - h))}{\mu(B_\epsilon(\hat u))} \leq 1
\end{equation}
for all $h\in E$.
\end{definition}
The first equality in Definition \ref{def:wmap} is given by Lemma \ref{lem:cont_repres}. We give two direct implications that illustrate the nature of the weak MAP estimate.
\begin{lemma}
\label{lem:map_is_wmap}
Every MAP estimate $\hat u$ is a weak MAP estimate.
\end{lemma}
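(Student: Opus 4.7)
The plan is essentially to unpack both definitions and glue them together via Lemma \ref{lem:cont_repres}. Fix $\hat u$ a MAP estimate and $h\in E$. My first move is to verify the first equality in the wMAP definition. By (A1), $h\in E\subset D(\mu)$, and (A2) gives the exponential integrability of $\beta^\mu_h$, so Proposition \ref{prop:onsager} applies: $\mu$ is quasi-invariant along $h$, and the Radon--Nikodym derivative $r_h$ of $\mu_h$ with respect to $\mu$ admits a continuous representative (namely $\exp(\int_0^1 \beta^\mu_h(\cdot-sh)ds)$, which is continuous because $\beta^\mu_h$ is continuous under (A1)). Now $\mu_h(B_\epsilon(\hat u)) = \mu(B_\epsilon(\hat u)-h) = \mu(B_\epsilon(\hat u - h))$, so Lemma \ref{lem:cont_repres} gives
\[
\lim_{\epsilon\to 0}\frac{\mu(B_\epsilon(\hat u - h))}{\mu(B_\epsilon(\hat u))} = r_h(\hat u).
\]
Note that the denominator is positive for each $\epsilon>0$ by the full-support assumption.

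The second step is the inequality $r_h(\hat u)\leq 1$. Here I would simply bound the numerator by $M^\epsilon$ and rewrite:
\[
\frac{\mu(B_\epsilon(\hat u - h))}{\mu(B_\epsilon(\hat u))} = \frac{\mu(B_\epsilon(\hat u - h))}{M^\epsilon}\cdot \frac{M^\epsilon}{\mu(B_\epsilon(\hat u))} \leq \frac{M^\epsilon}{\mu(B_\epsilon(\hat u))}.
\]
By the MAP assumption $\mu(B_\epsilon(\hat u))/M^\epsilon\to 1$, so the right-hand side converges to $1$ as $\epsilon\to 0$, and the left-hand side converges to $r_h(\hat u)$ by the first step. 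Therefore $r_h(\hat u)\leq 1$, which is exactly the wMAP condition. Since $\hat u\in X = \mathrm{supp}(\mu)$ is automatic, we are done.

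No serious obstacle is expected: the whole statement is essentially a bookkeeping exercise once the machinery of Proposition \ref{prop:onsager} and Lemma \ref{lem:cont_repres} is in place. The one subtle point worth double-checking is that the MAP condition genuinely forces $\mu(B_\epsilon(\hat u))>0$ for all sufficiently small $\epsilon$, so that the ratios are well-defined; this is ensured either by the MAP definition itself (otherwise the ratio to $M^\epsilon$ could not tend to $1$) or directly by the blanket assumption $\mathrm{supp}(\mu)=X$. Everything else is a direct limit comparison.
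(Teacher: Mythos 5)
Your argument is correct and is essentially the paper's own proof: the paper likewise bounds $\mu(B_\epsilon(\hat u - h)) \leq M^\epsilon$ and invokes the MAP condition $\mu(B_\epsilon(\hat u))/M^\epsilon \to 1$, with the identification $r_h(\hat u) = \lim_{\epsilon\to 0}\mu(B_\epsilon(\hat u - h))/\mu(B_\epsilon(\hat u))$ already supplied by Lemma~\ref{lem:cont_repres} and the continuity of $r_h$ under (A1)--(A2). Your version merely spells out these preliminaries that the paper treats as standing remarks.
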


\begin{proof}
The claim is trivial since
\begin{equation*}
	r_h(\hat u) \leq \lim_{\epsilon\to 0} \frac{M^\epsilon}{\mu(B_\epsilon(\hat u))} = 1
\end{equation*}
for any $h\in E$.
\end{proof}

In the convex setting, it is natural that convex combinations of solutions (i.e. MAP or wMAP estimates) are solutions as well, which is confirmed by the following result: 
\begin{proposition}
\label{prop:MAP_set_is_convex}
Let $\mu$ be a convex measure. Moreover, let $\hat u, \hat v \in X$ and set $w = (1-t) \hat u + t \hat v$ for any $t \in [0,1]$. Then the following two claims holds
\begin{itemize}
	\item[(1)] If $\hat u$ and $\hat v$ are MAP estimates, then so is $w$.
	\item[(2)] If $\hat u$ and $\hat v$ are wMAP estimates and $\hat u - \hat v \in E$, then $w$ is a wMAP estimate.
\end{itemize}
\end{proposition}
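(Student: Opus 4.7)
The plan is to prove the two claims in turn using the convex (log-concave) structure of $\mu$. Claim (1) follows almost directly from convexity applied to balls: the triangle inequality yields the Minkowski inclusion $(1-t)B_\epsilon(\hat u) + tB_\epsilon(\hat v) \subseteq B_\epsilon(w)$, so the convexity inequality \eqref{eq:def_convexity} applied to $A_1=B_\epsilon(\hat u)$ and $A_2=B_\epsilon(\hat v)$ gives $\mu(B_\epsilon(w)) \geq \mu(B_\epsilon(\hat u))^{1-t}\mu(B_\epsilon(\hat v))^t$. Dividing by $M^\epsilon$ and letting $\epsilon \to 0$, the MAP property of $\hat u$ and $\hat v$ makes the resulting lower bound tend to $1$, and combined with the trivial upper bound $\mu(B_\epsilon(w))/M^\epsilon \leq 1$ this shows that $w$ is a MAP estimate.

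For (2) the central tool is the multiplicative chain rule
\begin{equation*}
r_{h_1+h_2}(u) \;=\; r_{h_1}(u)\cdot r_{h_2}(u - h_1), \qquad h_1,h_2 \in E,\; u\in X,
\end{equation*}
which we obtain by inserting the intermediate ratio $\mu(B_\epsilon(u-h_1))$ in the limit \eqref{eq:wmap} and invoking Lemma \ref{lem:cont_repres} (equivalently, from the Radon--Nikodym identity $\mu_{h_1+h_2} = (\mu_{h_2})_{h_1}$ together with the continuity of the representatives from (A1)). Setting $\eta := \hat u - \hat v \in E$ and applying this with $h_1=\eta$, $h_2=-\eta$ at $u=\hat u$ yields $1 = r_0(\hat u) = r_\eta(\hat u)\cdot r_{-\eta}(\hat v)$. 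Since $\pm\eta\in E$ and the wMAP hypotheses force both factors to be $\leq 1$, both must equal $1$.

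Next we show that $r_{t\eta}(\hat u) = 1$ for every $t\in[0,1]$, by verifying that $G(t) := \log r_{t\eta}(\hat u)$ is concave on $\R$. Indeed, applying \eqref{eq:def_convexity} to $B_\epsilon(\hat u - t_1\eta)$ and $B_\epsilon(\hat u - t_2\eta)$, together with the inclusion $(1-s)B_\epsilon(\hat u - t_1\eta) + sB_\epsilon(\hat u - t_2\eta) \subseteq B_\epsilon(\hat u - ((1-s)t_1+st_2)\eta)$, shows that $t\mapsto \log\mu(B_\epsilon(\hat u - t\eta))$ is concave; concavity survives both the subtraction of the $t$-independent $\log\mu(B_\epsilon(\hat u))$ and the pointwise limit $\epsilon\to 0$ provided by Lemma \ref{lem:cont_repres}. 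The wMAP property of $\hat u$ (note $t\eta\in E$ for all $t\in\R$) forces $G(t)\leq 0$ on $\R$, while concavity with $G(0)=G(1)=0$ forces $G(t)\geq (1-t)G(0)+tG(1)=0$ on $[0,1]$; hence $G\equiv 0$ there.

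The conclusion is then immediate: for any $h\in E$ the chain rule at $u=\hat u$ with $h_1=t\eta$, $h_2=h$ gives $r_{t\eta+h}(\hat u) = r_{t\eta}(\hat u)\cdot r_h(\hat u - t\eta) = r_h(w)$, and since $t\eta+h\in E$ and $\hat u$ is wMAP we obtain $r_h(w) = r_{t\eta+h}(\hat u) \leq 1$. With $w\in X={\rm supp}(\mu)$ automatic, this shows $w$ is wMAP. The main obstacle is the rigidity identity $r_{t\eta}(\hat u)\equiv 1$ on $[0,1]$: it is what allows the chain rule to transport the wMAP bound from $\hat u$ to $w$, and it is the step in which both wMAP hypotheses (at $\hat u$ and at $\hat v$) genuinely enter the argument.
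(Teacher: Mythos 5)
Your proof is correct, and part (1) coincides with the paper's argument (the paper uses the equality $tB_\epsilon(\hat u)+(1-t)B_\epsilon(\hat v)=B_\epsilon(w)$, while your Minkowski inclusion suffices). For part (2) the raw ingredients are the same as in the paper --- factoring the small-ball ratio at $w$ through an intermediate point and applying the convexity inequality \eqref{eq:def_convexity} to balls centred along the segment $[\hat u,\hat v]$ --- but you organize them quite differently. The paper writes, with $g=\hat v-\hat u$,
$r_h(w)=\lim_{\epsilon\to 0}\bigl(\mu(B_\epsilon(\hat u+tg-h))/\mu(B_\epsilon(\hat u))\bigr)\cdot\bigl(\mu(B_\epsilon(\hat u))/\mu(B_\epsilon(\hat u+tg))\bigr)$, bounds the first factor by $1$ via the wMAP property of $\hat u$, and bounds the second factor by $r_{\hat v-\hat u}(\hat v)^t\le 1$ via convexity plus the wMAP property of $\hat v$; only one-sided estimates are needed. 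You instead first prove the rigidity identity $r_{t\eta}(\hat u)=1$ for all $t\in[0,1]$ (through the reciprocal identity $r_\eta(\hat u)\,r_{-\eta}(\hat v)=1$ and concavity of $t\mapsto\log r_{t\eta}(\hat u)$) and then transport the wMAP bound with the multiplicative chain rule $r_{h_1+h_2}(u)=r_{h_1}(u)\,r_{h_2}(u-h_1)$. Your route is slightly longer but yields a genuinely stronger by-product: every point of the segment has asymptotically equal small-ball probability, $\lim_{\epsilon\to 0}\mu(B_\epsilon(w))/\mu(B_\epsilon(\hat u))=1$, i.e.\ the limiting ``density ratio'' is constant on the convex set of weak MAP estimates; the chain rule is also a clean reusable identity that makes the bookkeeping transparent. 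One minor point worth making explicit in your write-up: both arguments use that the continuous representative $r_h$ is strictly positive (so that the intermediate ratios may be inverted and $\log r_{t\eta}(\hat u)$ is finite); this follows from the exponential formula \eqref{eq:general_onsager} under (A1)--(A2) together with the standing assumption that $\mu$ has full support.
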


\begin{proof}
The first claim follows directly by convexity (cf. equation \eqref{eq:def_convexity}) since
\begin{equation*}
	\lim_{\epsilon\to 0} \frac{\mu(B_\epsilon(w))}{M^\epsilon} \geq \lim_{\epsilon\to 0} \frac{\mu(B_\epsilon(\hat u))^t}{{(M^\epsilon)}^t}\frac{\mu(B_\epsilon(\hat v))^{1-t}}{{(M^\epsilon)}^{1-t}} = 1.
\end{equation*}
For the second property, 
notice that since $\hat u - \hat v \in E$, it must hold that $F_{\hat v - \hat u}(\hat u) = 1$.
Let us now write $g = \hat v - \hat u$ and $w = (1-t) \hat u + t \hat v = \hat u + t g$ for $t \in [0,1]$.
Let $h\in E$ be arbitrary. We have that
\begin{eqnarray*}
r_h(w) & = & \lim_{\epsilon\to 0} \frac{\mu(B_\epsilon(w-h))}{\mu(B_\epsilon(w))} \\
& = & \lim_{\epsilon\to 0} \left( \frac{\mu(B_\epsilon(\hat u + tg -h))}{\mu(B_\epsilon(\hat u))}
\cdot \frac{\mu(B_\epsilon(\hat u))}{\mu(B_\epsilon(\hat u + tg))} \right) \\
& \leq & \lim_{\epsilon\to 0} \left(\frac{\mu(B_\epsilon(\hat u))}{\mu(B_\epsilon(\hat u + g))} \right)^t
= r_{\hat v - \hat u}(\hat v)^t \leq 1,
\end{eqnarray*}
where we have used convexity and the fact that $\hat v$ satisfies equation \eqref{eq:wmap}. This yields the claim.
\end{proof}

In the next two theorems we describe sufficiency and necessity of a weak MAP estimate to be a zero point of $\beta^\mu_h$ for all $h\in E$. These results can be considered as the counterpart of sufficient and necessary optimality conditions in convex optimization.
\begin{theorem}
\label{thm:wmap_is_zero_point}
If $\hat u \in X$ is a weak MAP estimate of $\mu$, then $\beta^\mu_h(\hat u) = 0$ for all $h\in E$.
\end{theorem}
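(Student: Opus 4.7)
The plan is to exploit the wMAP inequality along the one-dimensional line spanned by each $h\in E$ and combine it with the integral representation of $r_h$ from Proposition \ref{prop:onsager}. Since $E$ is a Banach space, it is stable under scalar multiplication, so for any fixed $h\in E$ we may apply Definition \ref{def:wmap} with $th$ in place of $h$, for every $t\in\R$. This gives us a full family of inequalities $r_{th}(\hat u)\le 1$, $t\in\R$, which I will turn into an inequality on the real line by taking logarithms.

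Using $\beta^\mu_{th}=t\beta^\mu_h$ together with the formula \eqref{eq:general_onsager} (evaluated on its continuous representative, which is justified by (A1) and the full-support assumption since two continuous functions that agree $\mu$-a.e.\ must agree everywhere on $\mathrm{supp}(\mu)=X$), I get
\begin{equation*}
r_{th}(\hat u)=\exp\!\left(t\int_0^1 \beta^\mu_h(\hat u-sth)\,ds\right).
\end{equation*}
Setting $\psi(t):=\int_0^1 \beta^\mu_h(\hat u-sth)\,ds$, the wMAP condition becomes $t\,\psi(t)\le 0$ for every $t\in\R$. Next I show that $\psi$ is continuous at $0$ with $\psi(0)=\beta^\mu_h(\hat u)$: for $|t|\le 1$ the points $\hat u-sth$ lie in the compact segment $\{\hat u-\tau h : \tau\in[-\|h\|,\|h\|]\}$ (more precisely its continuous image in $X$), on which the continuous representative of $\beta^\mu_h$ supplied by (A1) is bounded, and the integrand converges uniformly in $s\in[0,1]$ to $\beta^\mu_h(\hat u)$ as $t\to 0$. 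Dominated convergence then gives $\psi(t)\to\beta^\mu_h(\hat u)$.

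Finally I read off both one-sided limits at $0$. For $t>0$ the inequality $t\,\psi(t)\le 0$ forces $\psi(t)\le 0$ and hence $\beta^\mu_h(\hat u)=\lim_{t\to 0^+}\psi(t)\le 0$; for $t<0$ we obtain $\psi(t)\ge 0$ and hence $\beta^\mu_h(\hat u)=\lim_{t\to 0^-}\psi(t)\ge 0$. Together these yield $\beta^\mu_h(\hat u)=0$, as required. The only delicate point is the passage from the $L^1(\mu)$-identity in Proposition \ref{prop:onsager} to a pointwise statement at the specific point $\hat u$; this is where assumption (A1) together with $\mathrm{supp}(\mu)=X$ is essential, and it is the one step that could not be weakened without losing the argument.
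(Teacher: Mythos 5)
Your proof is correct and follows essentially the same route as the paper: apply the wMAP inequality to the scaled directions $th\in E$, use the representation \eqref{eq:general_onsager} together with $\beta^\mu_{th}=t\beta^\mu_h$ to reduce to a sign condition on $\psi(t)=\int_0^1\beta^\mu_h(\hat u-sth)\,ds$, and let $t\to 0$ from both sides using the continuity of the representative guaranteed by (A1). The paper phrases the two one-sided limits as applying the same argument to $h$ and to $-h$, which is a trivially equivalent variant of your $t>0$ versus $t<0$ split.
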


\begin{proof}
It follows from $r_h(\hat u) \leq 1$ and identity \eqref{eq:general_onsager} that
\begin{equation*}
	\int_0^t \beta^\mu_h(\hat u - sh) ds = \int_0^1 \beta^\mu_{th} (\hat u - s'\cdot th) ds' \leq 0
\end{equation*}
for all $h\in E$ and $t\in\R$. By continuity we then have $\beta^\mu_h(\hat u)\leq 0$. Now since $h,-h \in E \subset D(\mu)$ and by similar reasoning $\beta^\mu_{-h}(\hat u)\leq 0$, we must have 
\begin{equation*}
	0 \leq -\beta^\mu_{-h}(\hat u) = \beta^\mu_{h}(\hat u) \leq 0
\end{equation*}
and the claim follows.
\end{proof}

\begin{theorem}
\label{thm:zero_point_is_wmap}
Suppose that $\mu$ is convex and there exists $\tilde u \in X$ such that $\beta^\mu_h(\tilde u) = 0$
for all $h\in E$. Then $\tilde u$ is a weak MAP estimate.
\end{theorem}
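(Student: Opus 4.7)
\emph{Strategy.} For each fixed $h \in E$ I would introduce the one-variable function $\psi(t) := \log r_{th}(\tilde u)$ and aim to show that (i) $\psi$ is concave on $\R$, (ii) $\psi(0) = 0$, and (iii) $\psi$ is differentiable at the origin with $\psi'(0) = 0$. A concave function on $\R$ lies below its supporting line at any interior point, so these three facts force $\psi(t) \leq \psi(0) + \psi'(0)\, t = 0$ for every $t \in \R$; in particular $\psi(1) \leq 0$, which says $r_h(\tilde u) \leq 1$. Combined with $\tilde u \in X = {\rm supp}(\mu)$ (from the standing assumption on full support), this is precisely the wMAP property of $\tilde u$. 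Note that $\psi$ is well-defined for every $t \in \R$: since $E$ is a vector space we have $th \in E \subset D(\mu)$, and the homogeneity $\beta^\mu_{th} = t\beta^\mu_h$ propagates the exponential-integrability bound (A2) from $h$ to $th$, so that Proposition \ref{prop:onsager} applies.

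\emph{Concavity of $\psi$.} Because each norm ball $B_\epsilon(0)$ is convex and balanced, $\alpha B_\epsilon(0) + (1-\alpha)B_\epsilon(0) = B_\epsilon(0)$ for $\alpha\in[0,1]$, and together with translation this yields $B_\epsilon(\alpha u_1 + (1-\alpha)u_2) = \alpha B_\epsilon(u_1) + (1-\alpha)B_\epsilon(u_2)$. The convexity of $\mu$ in the sense of \eqref{eq:def_convexity} then says that $u \mapsto \log\mu(B_\epsilon(u))$ is concave on $X$. Composing with the affine map $t \mapsto \tilde u - th$ and subtracting the constant $\log\mu(B_\epsilon(\tilde u))$ produces, for each $\epsilon > 0$, the concave functions $t \mapsto \log\bigl[\mu(B_\epsilon(\tilde u - th))/\mu(B_\epsilon(\tilde u))\bigr]$. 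Lemma \ref{lem:cont_repres}, applicable since $r_{th}$ has a continuous representative by (A1), shows this family converges pointwise to $\psi(t)$ as $\epsilon \to 0$. A pointwise limit of concave functions is concave, hence $\psi$ is concave on $\R$.

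\emph{Differentiability and conclusion.} By Proposition \ref{prop:onsager} together with $\beta^\mu_{th} = t\beta^\mu_h$,
\[
\psi(t) \;=\; \int_0^1 \beta^\mu_{th}(\tilde u - s \cdot th)\,ds \;=\; t\int_0^1 \beta^\mu_h(\tilde u - sth)\,ds,
\]
so $\psi(0)=0$ and $\psi(t)/t = \int_0^1 \beta^\mu_h(\tilde u - sth)\,ds$ for $t\neq 0$. As $t\to 0$ the integrand converges uniformly in $s\in[0,1]$ to $\beta^\mu_h(\tilde u)$ by the continuity of $\beta^\mu_h$ from (A1), and the hypothesis $\beta^\mu_h(\tilde u)=0$ then gives $\psi'(0) = 0$. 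Concavity with $\psi(0)=\psi'(0)=0$ now implies $\psi(t)\leq 0$ for all $t$, and evaluating at $t=1$ yields $r_h(\tilde u)\leq 1$, which is exactly \eqref{eq:wmap}.

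\emph{Main obstacle.} The delicate step is the transfer of concavity from the finite-scale ratios $\mu(B_\epsilon(\tilde u-th))/\mu(B_\epsilon(\tilde u))$ to $\psi$ itself; this is what forces the joint use of (A1), so that $r_{th}$ has a bona fide continuous representative, and of Lemma \ref{lem:cont_repres}, which identifies this representative with the pointwise small-ball limit. Once concavity of $\psi$ is secured, the explicit Onsager--Machlup-type formula from Proposition \ref{prop:onsager} reduces the rest to a direct computation of the value and derivative of $\psi$ at the origin under the hypothesis $\beta^\mu_h(\tilde u)=0$.
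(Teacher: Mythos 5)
Your argument is correct and rests on exactly the same ingredients as the paper's proof: the identity $tB_\epsilon(\tilde u-h)+(1-t)B_\epsilon(\tilde u)=B_\epsilon(\tilde u-th)$ combined with the convexity inequality \eqref{eq:def_convexity} to get log-concavity of the small-ball ratios, and the Onsager--Machlup formula \eqref{eq:general_onsager} with the homogeneity $\beta^\mu_{th}=t\beta^\mu_h$ and continuity from (A1) to compute $\partial_t\log r_{th}(\tilde u)|_{t=0}=\beta^\mu_h(\tilde u)=0$. The only difference is presentational: you run the argument directly, establishing global concavity of $\psi$ on $\R$ and invoking the supporting-line bound, whereas the paper argues by contradiction from $r_h(\tilde u)\geq 1+\delta$ and the resulting lower bound $r_{th}(\tilde u)\geq(1+\delta/2)^t$ on $[0,1]$; both are sound.
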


\begin{proof}
Let us assume that $\tilde u$ is not a weak MAP estimate and, consequently, there exists $h\in E$
such that
\begin{equation*}
	r_h(\tilde u) = \lim_{\epsilon\to 0} \frac{\mu(B_\epsilon(\tilde u - h))}{\mu(B_\epsilon(\tilde u))}
	\geq 1+\delta.
\end{equation*}
Further, choose $\epsilon'>0$ be such that for any $\epsilon<\epsilon'$ we have
$$\frac{\mu(B_\epsilon(\tilde u - h))}{\mu(B_\epsilon(\tilde u))} \geq 1 + \frac \delta 2.$$
Recall the following equality for a sum of balls
\begin{equation*}
	tB_\epsilon(\tilde u - h) + (1-t) B_\epsilon(\tilde u)
	= B_\epsilon(t(\tilde u - h) + (1-t) \tilde u) = B_\epsilon(\tilde u - th)
\end{equation*}
for $0\leq t \leq 1$. Now by convexity we have
\begin{equation*}
	\mu(B_\epsilon(\tilde u - th)) \geq \mu(B_\epsilon(\tilde u - h))^t \mu(B_\epsilon(\tilde u))^{1-t}
	\geq \left(1+ \frac \delta 2\right)^{t} \mu(B_\epsilon(\tilde u)).
\end{equation*}
This yields an inequality
\begin{equation*}
	r_{th}(\tilde u) \geq \left(1+\frac \delta 2\right)^{t}
\end{equation*}
and, finally, we conclude that
\begin{equation*}
	\beta^\mu_h(\tilde u) = \partial_t r_{th}(\tilde u) |_{t=0} \geq \left.\partial_t \left(1+\frac \delta 2\right)^{t}\right|_{t=0} = \ln\left(1+\frac \delta 2\right)
\end{equation*}
since $r_0(\tilde u) = (1+\delta/2)^0 = 1.$
This yields a contradiction with our assumption $\beta^\mu_h(\tilde u) = 0$.
\end{proof}

\section{Variational characterization and the posterior}
\label{sec:variational}

In the following we discuss the further variational characterization of MAP estimates in the inverse problems setting \eqref{eq:bayes} and the associated posterior distribution. 
Let us first discuss differentiation by parts in Fomin calculus.

\begin{proposition}[\cite{Boga10} Prop. 3.3.12.]
Let $\lambda$ be a measure differentiable along $h$ and let $f$ be a bounded measurable function possessing
a uniformly bounded partial derivative $\partial_h f $. Then, the measure $\mu = f \cdot \lambda$
is differentiable along $h$ as well and one has
\begin{equation}
	\label{eq:partial_diff}
	d_h \mu = \partial_h f \cdot \lambda + f \cdot d_h \lambda.
\end{equation}
\end{proposition}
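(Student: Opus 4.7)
The plan is to reduce the statement to the defining limit $d_h\mu(\mathcal{A}) = \lim_{t\to 0} t^{-1}[\mu(\mathcal{A}+th) - \mu(\mathcal{A})]$ and to split this difference into two naturally interpretable pieces. Using $\mathbf{1}_{\mathcal{A}+th}(x) = \mathbf{1}_{\mathcal{A}}(x-th)$ followed by the change of variable $y = x-th$ (whose pushforward of $\lambda$ is the translated measure $\lambda_{-th}$ defined by $\lambda_{-th}(B) := \lambda(B+th)$), I would rewrite
$$\mu(\mathcal{A}+th) = \int_{X} \mathbf{1}_{\mathcal{A}}(x-th) f(x) \, \lambda(dx) = \int_{\mathcal{A}} f(y+th) \, \lambda_{-th}(dy).$$
Adding and subtracting $\int_{\mathcal{A}} f(y) \, \lambda_{-th}(dy)$ then decomposes the increment as
$$\mu(\mathcal{A}+th) - \mu(\mathcal{A}) = \int_{\mathcal{A}} \bigl[f(y+th)-f(y)\bigr] \lambda_{-th}(dy) + \int_{\mathcal{A}} f(y) \, [\lambda_{-th}-\lambda](dy).$$

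Next I would divide by $t$ and pass to the limit in each term. In the first integral the difference quotient $t^{-1}[f(y+th)-f(y)]$ is bounded by $\|\partial_h f\|_{\infty}$ (via the fundamental theorem of calculus along the line $s\mapsto y+sh$) and converges pointwise to $\partial_h f(y)$; Fomin differentiability of $\lambda$ guarantees that $t\mapsto \lambda(B+th)$ is continuous for every Borel $B$, so $\lambda_{-th}\to\lambda$ setwise. A dominated-convergence argument combining these two facts yields the limit $\int_{\mathcal{A}} \partial_h f \, d\lambda$. In the second integral, the signed measures $\nu_t := t^{-1}(\lambda_{-th}-\lambda)$ converge setwise to $d_h\lambda$ by the very definition of Fomin differentiability, and Nikodym's theorem (invoked already after Definition 1 in the paper) ensures that their total variations are uniformly bounded in $t$; approximating the bounded measurable $f$ by simple functions then gives
$$\int_{\mathcal{A}} f \, d\nu_t \;\longrightarrow\; \int_{\mathcal{A}} f \, d(d_h\lambda) = \int_{\mathcal{A}} f \, \beta^\lambda_h \, d\lambda.$$

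Summing the two limiting contributions establishes simultaneously that the Fomin derivative $d_h\mu(\mathcal{A})$ exists and equals $\int_{\mathcal{A}} \partial_h f \, d\lambda + \int_{\mathcal{A}} f \, d(d_h\lambda)$, which is the product-rule identity at the level of signed measures asserted in \eqref{eq:partial_diff}. The main obstacle will be the second term: setwise convergence of signed measures does not automatically carry over to convergence of integrals against a fixed bounded measurable integrand, so one genuinely needs the Nikodym-type uniform boundedness of total variations together with a simple-function approximation to legitimize the passage to the limit. The first term is more routine, since the integrands are uniformly bounded, converge pointwise, and the measures $\lambda_{-th}$ are probability measures tending setwise to $\lambda$.
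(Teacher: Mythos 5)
The paper does not actually prove this proposition; it is quoted verbatim from Bogachev's monograph \cite{Boga10}, so there is no in-paper argument to compare against. Your decomposition $\mu(\mathcal{A}+th)-\mu(\mathcal{A})=\int_{\mathcal{A}}[f(y+th)-f(y)]\,\lambda_{-th}(dy)+\int_{\mathcal{A}}f\,d(\lambda_{-th}-\lambda)$ is the natural product-rule splitting, and your handling of the second term --- setwise convergence of $\nu_t=t^{-1}(\lambda_{-th}-\lambda)$ to $d_h\lambda$, Nikodym uniform boundedness of the total variations, and simple-function approximation of the bounded measurable $f$ --- is correct and is indeed the genuinely delicate half of the proof.

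The one place you are too quick is the first term, which you describe as routine. There the integrand $g_t(y)=t^{-1}[f(y+th)-f(y)]$ \emph{and} the measure $\lambda_{-th}$ both depend on $t$, so dominated convergence does not apply as stated: pointwise convergence of $g_t$ together with setwise convergence of $\lambda_{-th}$ does not by itself yield $\int g_t\,d\lambda_{-th}\to\int \partial_h f\,d\lambda$. The repair is available from ingredients you already invoke. The Nikodym bound $\sup_{0<|t|\le 1}\|\nu_t\|_{TV}=:C<\infty$ gives $\|\lambda_{-th}-\lambda\|_{TV}\le C|t|\to 0$, so split $\int_{\mathcal{A}}g_t\,d\lambda_{-th}=\int_{\mathcal{A}}g_t\,d\lambda+\int_{\mathcal{A}}g_t\,d(\lambda_{-th}-\lambda)$. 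The first integral converges to $\int_{\mathcal{A}}\partial_h f\,d\lambda$ by genuine dominated convergence against the fixed measure $\lambda$, using the mean value theorem bound $|g_t|\le\sup_X|\partial_h f|$; the second is bounded in absolute value by $\sup_X|\partial_h f|\cdot C|t|$ and vanishes. With that step made explicit your argument is complete: it establishes simultaneously the existence of $d_h\mu$ and the identity \eqref{eq:partial_diff}, and it is essentially the standard proof of the cited result.
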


The proposition above can be directly applied to posterior distribution given in equation \eqref{eq:bayes}.
From this point on, we require that the likelihood distribution is Gaussian.

\begin{theorem}
\label{thm:variational_map}
Let $\mu$ and $\lambda$ be the posterior and prior probability distribution in equation \eqref{eq:bayes}, respectively, and the likelihood is given by \eqref{eq:gaussian_like}. Moreover, suppose that Assumption (A1) holds.
Then we have $D(\lambda) \subset D(\mu)$ and 
	the posterior distribution $\mu$ has a logarithmic derivative $\beta_h^\mu \in L^1(\mu)$ such that
	\begin{equation}
		\beta_h^\mu(u) = -\bra Au-m, Ah\cet_{\R^M} + \beta_h^\lambda(u)
	\end{equation}
	for any $h \in D(\mu)$. Moreover, if Assumption (A2) holds, then also $\exp(|\epsilon \beta^\mu_h|) \in L^1(\mu)$ for some $\epsilon>0$.
\end{theorem}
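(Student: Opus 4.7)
The plan is to represent the posterior as $\mu = f \cdot \lambda$ where $f(u) = G^{-1}\exp(-\tfrac12 |Au-m|^2)$, and then invoke the Fomin product rule (equation \eqref{eq:partial_diff}) to get an explicit expression for $d_h\mu$ in terms of $d_h\lambda$ and the partial derivative $\partial_h f$.

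First I would verify the hypotheses of the quoted product-rule proposition. The function $f$ is bounded by $G^{-1}$, and a direct computation yields
\begin{equation*}
\partial_h f(u) = -\bra Au-m,Ah\cet_{\R^M}\, f(u) = -\tfrac{1}{G}\bra Au-m, Ah\cet_{\R^M}\exp\!\l(-\tfrac12|Au-m|^2\r),
\end{equation*}
which is uniformly bounded in $u$ since $t\mapsto t e^{-t^2/2}$ is bounded on $\R$. The proposition then gives, for any $h\in D(\lambda)$,
\begin{equation*}
d_h \mu = \partial_h f\cdot \lambda + f \cdot d_h \lambda = \l(-\bra Au-m,Ah\cet_{\R^M} + \beta^\lambda_h(u)\r) f\cdot \lambda.
\end{equation*}
Since $f\cdot \lambda = \mu$ and both $\bra Au-m,Ah\cet_{\R^M} f$ (again by boundedness of $t e^{-t^2/2}$) and $\beta^\lambda_h f$ are in $L^1(\lambda)$, the bracket is in $L^1(\mu)$. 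Dividing through (i.e., reading off the Radon--Nikodym derivative of $d_h\mu$ with respect to $\mu$) gives the claimed formula for $\beta^\mu_h$, and in particular $h\in D(\mu)$, which proves the inclusion $D(\lambda)\subset D(\mu)$.

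For the part concerning (A2), I would fix $h\in E$ and estimate pointwise
\begin{equation*}
\exp\!\l(\epsilon|\beta^\mu_h(u)|\r) \leq \exp\!\l(\epsilon |Au-m|\,|Ah|\r)\exp\!\l(\epsilon|\beta^\lambda_h(u)|\r).
\end{equation*}
Integrating against $\mu = f\cdot\lambda$ and applying Young's inequality $\epsilon|Au-m|\,|Ah|\leq \tfrac14|Au-m|^2 + \epsilon^2|Ah|^2$ absorbs the first exponential into the Gaussian factor $\exp(-\tfrac12|Au-m|^2)$ present in $f$, yielding
\begin{equation*}
\int_X \exp\!\l(\epsilon|\beta^\mu_h|\r) d\mu \leq \frac{e^{\epsilon^2|Ah|^2}}{G} \int_X \exp\!\l(-\tfrac14|Au-m|^2\r)\exp\!\l(\epsilon|\beta^\lambda_h|\r) d\lambda,
\end{equation*}
which is finite by (A2) for $\lambda$ (the extra Gaussian factor is bounded by one).

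The steps are essentially routine once one identifies the right splitting, and the only mild obstacle is checking that the product-rule proposition applies; the key observation there is the decay of $t\mapsto t e^{-t^2/2}$ that makes $\partial_h f$ uniformly bounded despite the linear factor $\bra Au-m,Ah\cet_{\R^M}$ being unbounded in $u$.
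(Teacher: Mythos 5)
Your proposal is correct and follows essentially the same route as the paper: decompose $\mu = f\cdot\lambda$ with the Gaussian likelihood factor $f$, apply the Fomin product rule \eqref{eq:partial_diff} to read off $\beta^\mu_h$, and then absorb the linear exponential growth $\exp(\epsilon|Au-m|\,|Ah|)$ into the Gaussian decay of $f$ (you via Young's inequality, the paper by completing the square) to verify the (A2) conclusion. Your explicit check that $\partial_h f$ is uniformly bounded via the boundedness of $t\mapsto t e^{-t^2/2}$ is a welcome detail that the paper's proof leaves implicit.
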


\begin{proof}
	Let $h\in D(\mu)$ and denote $f(u) = \exp(-\frac 12 |Au-m|^2)$ for $u\in X$. Since $A: X \to \R^M$ is bounded, the function $t\mapsto f(u+th)$ is everywhere differentiable. Moreover, we have $f\in L^1(d_h \lambda)$ and $\p_h f\in L^1(\lambda)$.
	By equation \eqref{eq:partial_diff} it follows that
	\begin{eqnarray*}
		d_h \mu & = & f \cdot d_h \lambda + \p_h f \cdot \lambda \\
		& = & \left(\beta^\lambda_h(\cdot) - \bra A\cdot-m, Ah\cet_{\R^M}\right) f \cdot \lambda.
	\end{eqnarray*}
	\correction{This yields the first part of the claim.} Now assume that $\exp(\epsilon |\beta^\lambda_h|) \in L^1(\lambda)$ holds for some $\epsilon>0$. We have
	\begin{multline*}
		\norm{\exp(\epsilon |\beta^\mu_h(\cdot)|)}_{L^1(\mu)} \\
		\leq C \int_X \exp(\epsilon(C_1|Au-m|+|\beta^\lambda_h(u)|)) \exp\left(-\frac 12 |Au-m|^2\right) \lambda(du) \\
		\leq \widetilde C \int_X \exp\left(-\frac 12 (|Au-m|-C_2)^2\right) \exp(\epsilon|\beta^\lambda_h(u)|) \lambda (du) \\
		\leq \widetilde C\norm{\exp(\epsilon|\beta^\lambda_h(\cdot)|)}_{L^1(\lambda)},
	\end{multline*}
	where $C,\widetilde C,C_1,C_2>0$ are suitable constants.
\end{proof}

The variational characterization of the wMAP estimate is a direct consequence:

\begin{corollary}
\label{cor:variational_map_J}
Let us assume that $\mu$ and $\lambda$ are as in Theorem \ref{thm:variational_map}. Moreover, we assume that the prior distribution $\lambda$ is a convex measure and there is an (unbounded) convex functional $J : X \to [0,\infty]$, which is Frechet differentiable everywhere in its domain $D(J)$ and $J'(u)$ has a bounded extension $J'(u) : E \to \R$ such that
\begin{equation*}
	\beta^\lambda_h(u) = -J'(u)h
\end{equation*}
for any $h\in E$ and any $u\in X$. Further, we require that $E\cap D(J)$ is topologically dense in $X$. Then a point $\hat u$ is a weak MAP estimate if and only if $\hat u \in \argmin_{u\in X} F(u)$ where
\begin{equation}
	\label{eq:posterior_F}
	F(u) = \frac 12 |Au-m|^2+ J(u).
\end{equation}
\end{corollary}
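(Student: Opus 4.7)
The proof proceeds in three stages, linking the logarithmic-derivative calculus to convex optimization of the functional $F$.

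First, I invoke Theorem \ref{thm:variational_map}: since Assumption (A1) holds and $E\subset D(\mu)$, every $h\in E$ lies in $D(\mu)$ and $\beta^\mu_h(u)=-\bra Au-m,Ah\cet_{\R^M}+\beta^\lambda_h(u)$. Substituting the representation $\beta^\lambda_h(u)=-J'(u)h$ yields, for all $h\in E$ and $u\in X$,
\begin{equation*}
\beta^\mu_h(u)=-\bigl(\bra Au-m,Ah\cet_{\R^M}+J'(u)h\bigr)=-F'(u)h.
\end{equation*}

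Second, I verify that the hypotheses of Theorems \ref{thm:wmap_is_zero_point} and \ref{thm:zero_point_is_wmap} apply. The prior $\lambda$ is convex by assumption and $V(u)=\frac12|Au-m|^2$ is continuous and convex with $\exp(-V)\le 1\in L^1(\lambda)$, so the convex-measure lemma preceding Corollary \ref{cor:prod_measures} gives that $\mu$ is convex. Therefore $\hat u$ is a wMAP estimate if and only if $\beta^\mu_h(\hat u)=0$ for all $h\in E$, i.e.\ if and only if $F'(\hat u)h=0$ for all $h\in E$.

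Third, I show this last condition is equivalent to $\hat u\in\argmin_{u\in X}F(u)$. Density of $E\cap D(J)$ in $X$ guarantees $D(J)\neq\emptyset$, hence $\min_X F<\infty$. For ($\Leftarrow$), any minimizer necessarily satisfies $\hat u\in D(J)$, and for each $h\in E$ the convex map $t\mapsto F(\hat u+th)$ is minimized at $0$ with derivative $F'(\hat u)h$; applying this to both $h$ and $-h\in E$ forces $F'(\hat u)h=0$. For ($\Rightarrow$), the Frechet differentiability of $J$ at $\hat u$ along $E$, combined with convexity of $F$, yields the subgradient inequality
\begin{equation*}
F(\hat u+h)\ge F(\hat u)+F'(\hat u)h=F(\hat u)
\end{equation*}
for every $h\in E$ with $\hat u+h\in D(J)$. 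I then extend this to arbitrary $v\in X$ by approximation: using the density of $E\cap D(J)$ in $X$, one selects an approximating sequence within the domain along which both the data term (continuous in $X$) and $J$ (lower semicontinuous as a proper convex functional with Frechet-differentiable structure) behave well in the limit, yielding $F(v)\ge F(\hat u)$.

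The main obstacle is the approximation in the ($\Rightarrow$) direction of the third stage: one has only directional control of $F'(\hat u)$ on the proper subspace $E$, yet must produce a globally valid minimization inequality on $X$. The density of $E\cap D(J)$ in $X$, the linearity of $E$ (which delivers vanishing directional derivatives in both directions $\pm h$), and the convexity-induced lower semicontinuity of $F$ are the three ingredients that close the argument, with care needed to keep the approximants inside $D(J)$ so that the subgradient inequality continues to apply before the limit is taken.
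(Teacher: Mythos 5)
Your argument follows the same route as the paper's own (very terse) proof: Theorem \ref{thm:variational_map} turns $\beta^\mu_h$ into $-F'(\cdot)h$, convexity of $\mu$ (via the lemma on convex measures applied to the Gaussian likelihood) makes Theorems \ref{thm:wmap_is_zero_point} and \ref{thm:zero_point_is_wmap} applicable in both directions, and density of $E$ is then used to convert the first-order condition $F'(\hat u)h=0$ for all $h\in E$ into minimality of $F$. You are considerably more explicit than the paper, which disposes of the last step with the single remark that $E$ is topologically dense.

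One concrete caution about your ($\Rightarrow$) approximation step, which is precisely the step the paper also leaves implicit. You invoke \emph{lower} semicontinuity of $J$, but that inequality points the wrong way: starting from $F(\hat u+h_n)\ge F(\hat u)$ with $\hat u+h_n\to v$, you need $F(v)\ge\limsup_n F(\hat u+h_n)$, i.e.\ a recovery-sequence property $\limsup_n J(\hat u+h_n)\le J(v)$; lower semicontinuity only gives $\liminf_n J(\hat u+h_n)\ge J(v)$, which yields nothing. In addition, the approximants must lie in $\hat u+E$ (the subgradient inequality is available only for increments $h\in E$), and since $\hat u$ need not belong to $E$, density of $E\cap D(J)$ in $X$ does not by itself produce such a sequence with control of the $J$-values. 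In the paper's examples (Besov, hierarchical) this is repaired by truncating the wavelet or spectral expansion of $v-\hat u$, which converges in the $J$-energy and not merely in $X$; as a general statement, the hypothesis actually needed is a Mosco-type density of $E$ relative to $J$ rather than topological density in $X$. This is a defect shared with the published proof rather than an error you introduced, but as written your "behave well in the limit" does not close the argument.
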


\begin{proof}
By Theorems \ref{thm:wmap_is_zero_point} and \ref{thm:zero_point_is_wmap}, a point $\hat u$ is a weak MAP estimate if and only if satisfies
\begin{equation*}
	-\beta^\mu_h(\hat u) = \bra Au-m, Ah\cet_{\R^M} + J'(u)h = 0.
\end{equation*}
Recall that by assumption (A1) the subspace $E$ is topologically dense and hence the claim holds.
\end{proof}

\begin{remark}
Suppose that there exists a MAP estimate for the posterior distribution $\mu$ and the corresponding functional $F$ in equation \eqref{eq:posterior_F} has a unique minimum. Then Lemma \ref{lem:map_is_wmap} directly yields that $\mu$ has 
a unique (strong) MAP estimate given by the minimizer of $F$. 
\end{remark}

\begin{example}
\label{ex:gaussian3}
Let $\gamma_W$ be a zero-mean Gaussian measure in Example \ref{ex:gaussian1} defined on $X := H^{-t}(\T^d)$ with $t>s/d$ such that $T = (I-\Delta)^{-t/2}$. A random variable with probability distribution $\gamma_W$ is called \emph{white noise} due to the property
\begin{equation*}
	\int_X \bra u, \phi\cet_{{\mathcal D}'} \bra u, \psi\cet_{{\mathcal D}'} \gamma_W(du)
	= \bra \phi, \psi \cet_{L^2(\T^d)}
\end{equation*}
for any $\phi,\psi\in C^\infty(\T^d)$, where $\bra \cdot, \cdot \cet_{{\mathcal D}'}$ stands for the 
distribution duality. From Example \ref{ex:gaussian1} we notice $D(\gamma_W) = H(\gamma_W) = L^2(\T^d)$ and conclude that $J(u) = \frac 12 \norm{u}_{L^2}^2$ satisfies assumptions in Corollary \ref{cor:variational_map_J} and the weak MAP estimate can be found by minimizing
\begin{equation}
	F(u) = \frac 12 |Au-m|^2+ \frac 12 \norm{u}_{L^2}^2.
\end{equation}
\end{example}

In earlier work \cite{BL14} by the authors, the MAP estimate was characterized by a Bayes cost method
using the Bregman distance
\begin{equation}
	\label{eq:bregman}
	D_J(u,v) = J(u) - J(v) - J'(v) (u-v).
\end{equation}
This approach is not directly possible in an infinite-dimensional setting since integrals of type
$\int_X J(v) \mu(dv)$ are not well-defined. In fact, the domain of $J$ has typically zero measure in terms of the posterior. For example, in case of Gaussian measure, it corresponds to the Cameron--Martin space. In non-Gaussian problems $D(J)$ and $D(\mu)$ do not always coincide, e.g. for the Besov prior in Section \ref{sec:besov} we have $D(J) \subset D(\mu)$.

We avoid the problem described above by considering homogeneous Bregman distance
\begin{equation}
	\label{eq:hom_bregman}
	\D_J(u,\cdot) = J(u) + \beta^\lambda_{u}(\cdot)
\end{equation}
in $L^1(\lambda)$ for any $u \in D(\lambda) \cap D(J)$. In other words, we neglect part of the Bregman distance that, from the perspective of minimizing $u$ in $\int_X D_J(u,v) \mu(dv)$, is "constant". \correction{Note that for one-homogeneous functionals $J$, i.e., $J(tu) = |t| J(u)$ for all $t\in\R$, homogeneous Bregman distance coincides exactly with the Bregman distance.}

In order to achieve a quadratic formulation below, we assume that the prior $\lambda$ 
has a finite second moment:
\begin{itemize}
	\item[(A3)] $\int_X \norm{u}^2_X \lambda(du) < \infty$.
\end{itemize} 
Also, recall that the conditional mean (CM) estimate is defined by
\begin{equation}
	\label{eq:cm}
	\cm = \int_X u \mu(du).
\end{equation}
The next lemma is available in more generality in \cite{Buldygin}. However, for convenience we record
a simplification here.
\begin{lemma}
\label{lem:var_cm}
Let $L : X \to Y$ be linear, bounded and invertible, where $Y$ is a separable Hilbert space. Then for any $\beta > 0$ we have
\begin{equation}
	\cm = \argmin_{u\in X} \int_X \left(\frac \lambda 2 |Au-Av|^2 + \frac \beta 2 \norm{Lu-Lv}_Y^2 \right) \mu(dv)
\end{equation}
\end{lemma}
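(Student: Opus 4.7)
The plan is to complete the square around $\cm$ and exploit the defining property of the conditional mean. Write $\Phi(u) := \int_X \bigl(\tfrac{\lambda}{2}|Au-Av|^2 + \tfrac{\beta}{2}\norm{Lu-Lv}_Y^2\bigr)\mu(dv)$ for the functional to be minimized (here $\lambda$ and $\beta$ denote positive scalars). First I would check that $\cm$ is well-defined as a Bochner integral in $X$: Assumption (A3) together with $\pi(m\mid u)\le 1$ (and Cauchy--Schwarz) implies $\int_X \norm{u}_X \mu(du) \le G^{-1}\bigl(\int_X \norm{u}_X^2 \lambda(du)\bigr)^{1/2} < \infty$, and the analogous bound on the second moment guarantees that $\Phi(u)$ is finite for every $u\in X$ because $A$ and $L$ are bounded.

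Next I would carry out the orthogonal decomposition. For arbitrary $u\in X$, write $Au-Av = A(u-\cm) + A(\cm - v)$ and $Lu-Lv = L(u-\cm) + L(\cm - v)$. Expanding the squared norms and integrating gives
\begin{equation*}
\Phi(u) = \tfrac{\lambda}{2}|A(u-\cm)|^2 + \tfrac{\beta}{2}\norm{L(u-\cm)}_Y^2 + \mathrm{cross} + C_0,
\end{equation*}
where $C_0 = \int_X \bigl(\tfrac{\lambda}{2}|A(\cm-v)|^2 + \tfrac{\beta}{2}\norm{L(\cm-v)}_Y^2\bigr)\mu(dv)$ is independent of $u$, and the cross term equals
\begin{equation*}
\lambda\,\bra A(u-\cm),\, A\cm - A\textstyle\int_X v\,\mu(dv)\cet_{\R^M} + \beta\,\bra L(u-\cm),\, L\cm - L\textstyle\int_X v\,\mu(dv)\cet_Y .
\end{equation*}
Pulling the bounded linear operators $A$ and $L$ through the Bochner integral (valid by the first-moment bound above) and using the defining equation $\cm = \int_X v\,\mu(dv)$, both inner products vanish.

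Therefore $\Phi(u) = \tfrac{\lambda}{2}|A(u-\cm)|^2 + \tfrac{\beta}{2}\norm{L(u-\cm)}_Y^2 + C_0$, and minimization over $u\in X$ reduces to minimizing a sum of two non-negative quadratic terms. Since $L$ is invertible, $\norm{L(u-\cm)}_Y^2 = 0$ forces $u = \cm$, so the minimum is attained uniquely at $u=\cm$ (no assumption on $A$ beyond boundedness is needed, because strict convexity already comes from the $L$-term). The only subtle point is the manipulation of the Bochner integral and the interchange with the bounded linear operators $A$, $L$; this is the place where Assumption (A3) is actually used, so I would make the integrability bookkeeping explicit rather than waving at it.
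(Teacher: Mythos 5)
Your proof is correct and follows essentially the same route as the paper's: an orthogonal decomposition of the quadratic cost around $\cm$, with the cross term vanishing by linearity and the definition of the conditional mean, and uniqueness coming from the invertibility of $L$. The extra bookkeeping you supply on the Bochner integrability via Assumption (A3) and on interchanging $A$, $L$ with the integral is a welcome tightening of steps the paper leaves implicit, but it does not change the argument.
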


\begin{proof}
We have
\begin{align*}
& \int_X \left(\frac 1 2 |Au-Av|^2 + \frac \beta 2 \norm{Lu-Lv}_Y^2 \right) \mu(dv) = \\
& \qquad 
\int_X \left(\frac 1 2 |Au_{CM}-Av|^2 + \frac \beta 2 \norm{Lu_{CM}-Lv}_Y^2 \right) \mu(dv) + \\
& \qquad 
\int_X \left(\frac 1 2 |Au-Au_{CM}|^2 + \frac \beta 2 \norm{Lu-Lu_{CM}}_Y^2 \right) \mu(dv) +
\\
& \qquad 
\int_X  \left\langle A^* A(u-u_{CM} ) + \beta  L^* L(u-u_{CM} ), u_{CM}-v  \right\rangle_{X^*\times X} \mu(dv)
\end{align*}
for any $u\in X$.
The first term on the right-hand side is the cost for $u_{CM}$, the second term is nonnegative and vanishes only for $u=u_{CM}$, and the last one vanishes due to linearity and the definition of the CM estimate. Thus, $u=u_{CM}$ is the unique minimizer.
\end{proof}

In the next theorem we finally arrive to a characterization of the weak MAP estimate with Bregman distance and Bayes cost. As a consequence, we can also give a result stating that in terms of Bayes cost with respect to $\D_J$ a weak MAP estimate is more optimal than the CM estimate.
\begin{theorem}
\label{thm:cost}
Assume that $\mu$ and $\lambda$ are as in Corollary \ref{cor:variational_map_J} and Assumptions (A1)-(A3)
are satisfied. Then the following two claims hold:
\begin{itemize}
\item[(1)]
The vector $\hat u \in X$ is a weak MAP estimate if and only if it minimizes
the functional
\begin{equation}
	\label{eq:var_map}
	G(u) = \int_X \left(\frac 12 |Au-Av|^2 + \D_J(u,v)\right) \mu(dv), \quad
	u \in D(J)\cap D(\lambda),
\end{equation}
where $G(u) = \infty$ for $u\in X \setminus D(J)\cap D(\lambda)$.
\item[(2)]Moreover, let $\map$ and $\cm$ denote a wMAP and the CM estimate. We have that
\begin{equation*}
	\int_X \D_J(\map,u) \mu(du) \leq \int_X \D_J(\cm,u) \mu(du).
\end{equation*}
\end{itemize}
\end{theorem}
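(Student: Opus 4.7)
The strategy for part (1) is to reduce $G$ to the functional $F(u)=\tfrac 12 |Au-m|^2+J(u)$ from Corollary \ref{cor:variational_map_J} up to an additive constant, so that the wMAP characterization then follows from that corollary. Assumption (A3) guarantees that $\cm$ is well defined and that the integrals appearing below are finite.

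The key observation is that, for every $h\in D(\mu)$, the total mass of $d_h\mu$ vanishes, whence
\begin{equation*}
	\int_X \beta^\mu_h(v)\,\mu(dv) = 0.
\end{equation*}
Applying Theorem \ref{thm:variational_map} together with the representation $\beta^\lambda_h(v) = -J'(v)h$ for $h\in E$ yields
\begin{equation*}
	\int_X J'(v)h\,\mu(dv) = -\bra A\cm-m, Ah\cet_{\R^M}.
\end{equation*}
Substituting $\D_J(u,v) = J(u)-J'(v)u$ into \eqref{eq:var_map}, expanding the quadratic and using $\int_X v\,\mu(dv)=\cm$, I get
\begin{equation*}
	G(u) = \tfrac 12 |Au|^2 - \bra Au, A\cm\cet + J(u) - \int_X J'(v)u\,\mu(dv) + C_1,
\end{equation*}
with $C_1$ independent of $u$. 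Inserting the previous identity (with the test direction taken to be $u$) cancels the $\cm$ cross term and leaves $G(u) = \tfrac 12 |Au-m|^2+J(u)+\mathrm{const} = F(u)+\mathrm{const}$, which proves (1).

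For part (2), the map $u\mapsto \int_X \tfrac 12 |Au-Av|^2\,\mu(dv)$ equals $\tfrac 12|Au-A\cm|^2$ up to an additive constant and is therefore minimized at $u=\cm$, as follows from Lemma \ref{lem:var_cm}. Since $G(\map)\le G(\cm)$ by part (1) while the quadratic piece of $G$ is smaller at $\cm$, the Bregman piece $\int_X \D_J(u,v)\,\mu(dv)$ must satisfy the claimed inequality at $u=\map$.

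The main obstacle I anticipate is the technical justification of extending the identity $\int J'(v)h\,\mu(dv) = \bra m-A\cm, Ah\cet_{\R^M}$ from test directions $h\in E$, where Theorem \ref{thm:variational_map} applies directly, to the direction $h=u\in D(J)\cap D(\lambda)$ needed when evaluating $\int\beta^\lambda_u(v)\,\mu(dv)$. This should follow from the density of $E$ in $X$ guaranteed by (A1), linearity in the test direction, and integrability ensured by (A3); the domain restriction in \eqref{eq:var_map} is precisely what makes $J(u)<\infty$ and $\beta^\lambda_u\in L^1(\lambda)$ simultaneously, so that the Bregman contribution is well defined.
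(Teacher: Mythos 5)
Your proof of part (1) is correct and follows essentially the same route as the paper: expand the quadratic, use $\int_X \beta^\mu_h\,\mu(dv)=d_h\mu(X)=0$ together with the decomposition of $\beta^\mu_h$ from Theorem \ref{thm:variational_map}, and reduce $G$ to $F+\mathrm{const}$ so that Corollary \ref{cor:variational_map_J} applies. The obstacle you flag at the end is in fact already absorbed by the paper's setup: Theorem \ref{thm:variational_map} gives $\beta^\mu_h = -\bra A\cdot-m,Ah\cet+\beta^\lambda_h$ for all $h\in D(\mu)\supset D(\lambda)$, and the homogeneous Bregman distance is \emph{defined} via $\D_J(u,\cdot)=J(u)+\beta^\lambda_u(\cdot)$ for $u\in D(J)\cap D(\lambda)$, so one can work with $\beta^\lambda_u$ directly and never needs to extend the representation $\beta^\lambda_h=-J'(\cdot)h$ beyond $E$.

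For part (2) you take a genuinely shorter route than the paper. Writing $G=Q+B$ with $Q(u)=\int_X\frac12|Au-Av|^2\mu(dv)=\frac12|Au-A\cm|^2+\mathrm{const}$ and $B(u)=\int_X\D_J(u,v)\mu(dv)$, you use $G(\map)\le G(\cm)$ and $Q(\cm)\le Q(\map)$ to conclude $B(\map)\le B(\cm)$. This is valid (assuming, as the paper does, that $\cm\in D(J)\cap D(\lambda)$, the statement being trivial otherwise) and avoids the paper's detour through Lemma \ref{lem:var_cm}, where an invertible operator $L$ and a penalty $\frac\beta2\norm{L u-Lv}_Y^2$ are introduced and the limit $\beta\to 0$ is taken. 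The only inaccuracy is your attribution: Lemma \ref{lem:var_cm} as stated concerns the $L$-regularized functional and is needed precisely because $A$ alone does not single out $\cm$ uniquely; for your argument uniqueness is irrelevant and the elementary identity $Q(u)=\frac12|Au-A\cm|^2+\mathrm{const}$ (finite by (A3)) already shows $\cm$ is \emph{a} minimizer of $Q$, which is all you use. So your part (2) is both correct and arguably cleaner than the paper's.
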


\begin{proof}
The first claim follows by rewriting $G(u)$, $u\in D(J)\cap D(\lambda)$, as 
\begin{eqnarray*}
G(u) & = & \frac 12 |Au|^2 + J(u) + \int_X \left(-\bra A^* Av,u\cet_{X^*\times X}+\beta^\lambda_u(v)\right) \mu(dv) + C \\
& = & \frac 12 |Au-m|^2 + J(u) + C,
\end{eqnarray*}
where $C$ is a constant and we have used the fact that the integral term $\int_X \beta^\mu_h(v) \mu(dv)$ vanishes.

For the second claim, we can assume $\cm \in D(\lambda)$ since otherwise the statement is trivial.
Following \cite{BL14} we obtain 
\begin{multline*}
	\int_X \left(|A\map - Au)|^2 + \D_J(\map, u)\right) \mu(du) \\
	\leq \int_X \left( |A\cm - Au)|^2 + \D_J(\cm, u)\right) \mu(du) \\
	\leq \int_X \left(|A\map - Au)|^2 + \D_J(\cm, u)\right) \mu(du) \\
	+ \beta \int_X \left(\norm{L\map-Lu}_Y^2-\norm{L\cm-Lu}_Y^2\right)\mu(du).
\end{multline*}
where we have utilized Lemma \ref{lem:var_cm} and the first claim.
Since $\beta>0$ is arbitrary, we can consider $\beta\to 0$ and obtain the result.
\end{proof}

\section{Example 1: Besov prior}
\label{sec:besov}

In this Section we consider the Besov space prior studied in \cite{LSS09}.
Suppose that functions $\{\psi_\ell\}_{\ell=1}^\infty$ form
an orthonormal wavelet basis 
for $L^2(\T^d)$, where we have utilized a global indexing.
We can characterize the periodic Besov space $B^s_{pq}(\T^d)$ using the given basis
in the following way: the series
\begin{equation}
	f(x) = \sum_{\ell=1}^\infty c_\ell \psi_\ell(x)
\end{equation}
belongs to $B^s_{pq}(\T^d)$ if and only if
\begin{equation}
	\label{eq:orig_norm}
	2^{js} 2^{j(\frac 12 -\frac 1p)} \left( \sum_{\ell=2^j}^{2^{j+1}-1} |c_\ell|^p\right)^{1/p} \in \ell^q(\N).
\end{equation}
We assume that the basis is $r$-regular for $r$ is large enough in order to provide basis for a Besov space with smoothness $s$ \cite{Daubechies}.

In the following we are concerned with the special case $p=q$ and write $B^s_{p} = B^s_{pp}$. It is well-known that
an equivalent norm to \eqref{eq:orig_norm} is given by
\begin{equation}
	\norm{\sum_{\ell=1}^\infty c_\ell\psi_\ell}_{B^s_{p}(\T^d)}
	= \left(\sum_{\ell=1}^\infty \ell^{p(\frac sd+\frac 12)-1} |c_\ell|^p \right)^{1/p}.
\end{equation}
We now follow the construction in \cite{LSS09} to define a Besov prior
using the wavelet basis. Notice also the work in \cite{Dashti_besov,Ramlau} on non-linear problems and
Besov prior on the full space $\R^d$, respectively.
\begin{definition}
\label{def:besov_prior}
Let $1 \leq p<\infty$ and let $(X_\ell)_{\ell=1}^\infty$ be independent identically
distributed real-valued random variables with the probability density function
\begin{equation}
	\pi_X(x) = \sigma_p \exp(-|x|^p) \quad {\rm with } \quad
	\sigma_p = \left(\int_\R \exp(-|x|^p)dx\right)^{-1}.
\end{equation}
Let $U$ be the random function
\begin{equation*}
	U(x) = \sum_{\ell=1}^\infty \ell^{-\frac sd-\frac 12 + \frac 1p} X_\ell \psi_\ell(x),
	\quad x \in \T^d.
\end{equation*}
Then we say that $U$ is distributed according to a $B^s_p$ prior.
\end{definition}

\begin{lemma}\cite[Lemma 2]{LSS09}
\label{lem:besov_lss}
Let $U$ be as in Definition \ref{def:besov_prior} and let $t<s-\frac dp$. Then it holds that
\begin{itemize}
	\item[(i)] $\norm{U}_{B^t_{p}} < \infty$, \quad almost surely, and
	\item[(ii)] $\expec \exp(\frac 12 \norm{U}_{B^t_{p}}^p ) < \infty$.
\end{itemize}
\end{lemma}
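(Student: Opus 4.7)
The strategy is to reduce both claims to explicit one-dimensional moment computations involving the i.i.d.\ coefficients $X_\ell$. Using the equivalent $B^t_p$ norm recalled just before Definition \ref{def:besov_prior}, the wavelet coefficients $c_\ell = \ell^{-s/d - 1/2 + 1/p} X_\ell$ of $U$ give
\[
\norm{U}_{B^t_p}^p = \sum_{\ell=1}^\infty \ell^{p(t/d+1/2)-1}\cdot \ell^{-ps/d - p/2 + 1}|X_\ell|^p = \sum_{\ell=1}^\infty a_\ell|X_\ell|^p,
\]
where $a_\ell := \ell^{p(t-s)/d}$. The hypothesis $t<s-d/p$ is exactly $p(t-s)/d < -1$, which is equivalent to $\sum_\ell a_\ell < \infty$. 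Since also $t<s$, we have $a_\ell\leq 1$ for all $\ell\geq 1$. For (i), $\expec|X_1|^p < \infty$ and Tonelli give $\expec \norm{U}_{B^t_p}^p = \expec|X_1|^p \sum_\ell a_\ell < \infty$, so $\norm{U}_{B^t_p} < \infty$ almost surely.

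For (ii), the independence of the $(X_\ell)$ yields the factorization
\[
\expec \exp\Bigl(\tfrac12\norm{U}_{B^t_p}^p\Bigr) = \prod_{\ell=1}^\infty \expec\exp\Bigl(\tfrac12 a_\ell |X_\ell|^p\Bigr).
\]
A direct substitution in the $\pi_X$-integral gives, for each $\ell$,
\[
\expec\exp\Bigl(\tfrac12 a_\ell |X_\ell|^p\Bigr) = \sigma_p\int_{\R} \exp\bigl(-(1-a_\ell/2)|x|^p\bigr)\, dx = (1-a_\ell/2)^{-1/p},
\]
which is finite since $1-a_\ell/2\geq 1/2$. Taking logarithms yields
\[
\log \expec\exp\Bigl(\tfrac12\norm{U}_{B^t_p}^p\Bigr) = -\frac{1}{p}\sum_\ell \log(1-a_\ell/2),
\]
and because $-\log(1-x)\leq 2x$ on $[0,1/2]$ (so $-\log(1-a_\ell/2)\leq a_\ell$), this sum is bounded by $(1/p)\sum_\ell a_\ell < \infty$, giving (ii).

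The main subtlety lies in tracking the exponents on $\ell$ so that the condition $t < s - d/p$ emerges naturally as summability of the single sequence $(a_\ell)$; once this algebra telescopes, the remaining arguments reduce to an elementary $\pi_X$-integral and the standard product formula for exponential moments of independent variables.
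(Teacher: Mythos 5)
Your computation is correct: the exponents telescope to $a_\ell=\ell^{p(t-s)/d}$, the hypothesis $t<s-d/p$ is precisely summability of $(a_\ell)$, and the one-dimensional integral $\sigma_p\int_\R \exp(-(1-a_\ell/2)|x|^p)\,dx=(1-a_\ell/2)^{-1/p}$ together with $-\log(1-x)\le 2x$ on $[0,1/2]$ closes the argument. The paper itself gives no proof of this lemma but cites \cite[Lemma 2]{LSS09}, where the argument is essentially this same factorization-and-scaling computation, so your route matches the standard one.
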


\correction{Let us denote by $\rho_\ell$ the probability measure of random variable $\ell^{-s/d-1/2+1/p}X_\ell$ on $\R$.
Next we consider the product measure of coefficients $(\ell^{-s/d-1/2+1/p}X_\ell)_\ell$ in $(\R^\infty,{\mathcal B}(\R^\infty))$ and denote it by $\lambda=\prod_{\ell=1}^\infty \rho_\ell$.}
Notice carefully that here $\R^\infty$ does not have a Banach structure.
By Lemma \ref{lem:besov_lss} we could as well consider versions of $\lambda$ in a subspace of $\R^\infty$ that correspond to a Besov space $B^t_p(\T^d)$ with some fixed $t<s-\frac dp$.
However, the parameter $t$ does not play any role in the analysis below and
the set of differentiability is not affected by this choice.
In addition, the results on product measures in Section \ref{sec:prelim} become directly available.

In the following we assume that $p>1$ in order to have $\pi_X$ differentiable. Also, the prior would not satisfy assumption (A1) for $p=1$. It remains for future work to generalize the given results to this important case.
We point out that $\lambda$ is convex due to \cite[Prop. 4.3.3]{Boga10} and clearly has a full support.

\begin{theorem}
The set of differentiability is given by
$D(\lambda) = B^{s+(\frac 12 - \frac 1p)d}_{2}(\T^d)$ for $p>1$ and for any $h = \sum_{\ell=1}^\infty h_\ell \phi_\ell \in D(\lambda)$ we have
\begin{equation*}
	\beta_h^\lambda(u) = \lim_{N\to\infty} \sum_{\ell=1}^N \beta_{h_\ell}^\lambda(u_\ell)
	\quad {\rm in } \; L^1(\lambda),
\end{equation*}
where
\begin{equation*}
\beta_{h_\ell}^\lambda(u_\ell) = h_\ell \left(-p \sign (u_\ell) \ell^{p(\frac sd+\frac 12)-1} |u_\ell|^{p-1}\right)
\end{equation*}
is the logarithmic derivative of $\rho_\ell$.
\end{theorem}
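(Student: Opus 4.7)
The plan is to recognize that the hypotheses of Corollary \ref{cor:prod_measures} apply almost verbatim, and then translate the resulting weighted $\ell^2$ condition into the claimed Besov norm.

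First, I would rewrite $\lambda=\prod_\ell \rho_\ell$ in the form required by Corollary \ref{cor:prod_measures}. Set $m := \pi_X \cdot \mathrm{d}x$, the law of $X_\ell$, and $a_\ell = \ell^{-s/d-1/2+1/p}$, so that $\rho_\ell(A)=m(A/a_\ell)$. To invoke the corollary I must check two things about $m$. Differentiability of $\pi_X$: for $p>1$, the function $x\mapsto |x|^p$ is $C^1$ on $\R$ with derivative $p\,\sign(x)|x|^{p-1}$ vanishing at $0$, so $\pi_X$ is $C^1$ and
\[
m'(x)=-p\,\sign(x)|x|^{p-1}\pi_X(x).
\]
Finite Fisher information: a direct computation gives
\[
\int_\R \frac{m'(x)^2}{m(x)}\,\mathrm{d}x = p^2 \sigma_p \int_\R |x|^{2(p-1)} e^{-|x|^p}\,\mathrm{d}x,
\]
which is finite since the integrand decays super-polynomially at infinity and is integrable near zero (as $2(p-1)>-1$ for $p>1$). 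This pinpoints precisely why the case $p=1$ is excluded.

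Second, I would identify the one-dimensional logarithmic derivative. By the weak integration-by-parts characterization of Fomin differentiability, $\beta_1^m(x) = m'(x)/m(x) = -p\,\sign(x)|x|^{p-1}$. Since $\rho_\ell$ is obtained from $m$ by dilation by $a_\ell$, the scaling identity $\beta_h^{\rho_\ell}(u_\ell) = a_\ell^{-1}\beta_{h/a_\ell}^{m}$, established in the proof of Corollary \ref{cor:prod_measures}, yields
\[
\beta_{h_\ell}^{\rho_\ell}(u_\ell) = h_\ell a_\ell^{-1}\beta_1^{m}(u_\ell/a_\ell) = -h_\ell \, p\,\sign(u_\ell)\,a_\ell^{-p}|u_\ell|^{p-1},
\]
which, using $a_\ell^{-p}=\ell^{p(s/d+1/2)-1}$, is exactly the asserted expression.

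Third, by Corollary \ref{cor:prod_measures},
\[
D(\lambda) = H(\lambda) = \Bigl\{h=(h_\ell)\in\R^\infty \;\Big|\; \sum_{\ell=1}^\infty a_\ell^{-2} h_\ell^2 < \infty\Bigr\},
\]
and the partial sums $\sum_{\ell=1}^N \beta^{\rho_\ell}_{h_\ell}$ converge to $\beta_h^\lambda$ in $L^1(\lambda)$ (in fact, in $L^2(\lambda)$) by Theorem \ref{thm:product_measures_conv}(ii). Finally, a bookkeeping step identifies the weights: $a_\ell^{-2}=\ell^{2(s/d+1/2-1/p)}=\ell^{2t/d+1-1}$ with $t:=s+(1/2-1/p)d$, so the condition $\sum_\ell a_\ell^{-2}h_\ell^2<\infty$ is exactly the condition that $h=\sum h_\ell\psi_\ell$ belong to $B^{t}_2(\T^d)$ in the wavelet-defined norm introduced above Definition \ref{def:besov_prior}.

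The main subtlety I expect is not any single estimate but rather the careful bookkeeping between the wavelet indexing (level $j$ versus global index $\ell$), the prior normalization $a_\ell$, and the resulting Besov exponent: one must be sure that the weight $\ell^{p(s/d+1/2)-1}$ appearing in the Besov norm, the coefficient decay $\ell^{-s/d-1/2+1/p}$ in Definition \ref{def:besov_prior}, and the target smoothness $s+(1/2-1/p)d$ line up consistently. The analytic content—finite Fisher information, $C^1$ regularity of $\pi_X$, and the scaling of $\beta^{\rho_\ell}_h$—is immediate once the $p>1$ hypothesis is in place.
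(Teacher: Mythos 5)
Your proposal is correct and follows essentially the same route as the paper: verify finite Fisher information of $\pi_X$, observe the dilation structure $\rho_\ell(\mathcal{A})=\rho_1(c_\ell\mathcal{A})$, invoke Corollary \ref{cor:prod_measures} to get the weighted $\ell^2$ description of $D(\lambda)$, and match the weights to the $B^{s+(1/2-1/p)d}_2$ norm; your extra details on the explicit one-dimensional logarithmic derivative and the $L^1(\lambda)$ convergence of the partial sums are welcome, since the paper leaves those implicit. One small side remark is off: the integrability of $|x|^{2(p-1)}e^{-|x|^p}$ near zero is not what excludes $p=1$ (indeed $2(p-1)=0>-1$ there, and the Laplace density has finite Fisher information); the paper excludes $p=1$ because $\pi_X$ is then not differentiable and assumption (A1) fails.
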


\begin{proof}
The probability distribution $\pi_X$ on $\R$ has finite Fisher information since
\begin{equation}
	\int_\R \frac{\pi_X'(t)^2}{\pi_X(t)} dt = p^2 \int_\R |t|^{2(p-1)} \pi(t) dt = p \sigma_p \Gamma\left(2-\frac 1p\right) < \infty
\end{equation}
for any $p>1$, where $\Gamma$ is the Gamma function. Clearly, it holds that
\begin{equation}
	\rho_\ell({\mathcal A}) = \rho_1(c_\ell{\mathcal A}),
\end{equation}
for any ${\mathcal A} \in {\mathcal B}(\R)$, where $c_\ell = \ell^{s/d+1/2-1/p}$. 
By Corollary \ref{cor:prod_measures} we have
\begin{equation*}
	D(\lambda) = \left\{y\in\R^\infty \; | \; \sum_{\ell=1}^\infty c_\ell^2 y_\ell^2<\infty\right\} = \left\{y \in \R^\infty \; | \; \sum_{\ell=1}^\infty \ell^{\frac{2s}{d} + 1-\frac 2p} y_\ell^2<\infty\right\}. 
\end{equation*}
and, consequently, $D(\lambda) = B^{s'}_{2}(\T^d)$ for $s'=s + (1/2-1/p)d$.
\end{proof}

We turn our attention to assumption (A1) and to subspace $E \subset D(\lambda)$.

\begin{lemma}
\label{lem:besov_beta_ineq}
For any $h\in B_{p}^{ps-(p-1)t}(\T^d)$ the logarithmic derivative $\beta^\lambda_h$ has the upper bound 
\begin{equation*}
	|\beta^\lambda_h(u)| \leq C \norm{u}_{B^t_{p}}^{p-1} \norm{h}_{B_{p}^{ps-(p-1)t}}.
\end{equation*}
$\lambda$-almost surely.
\end{lemma}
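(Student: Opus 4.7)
My plan is to bound the series representation of $\beta^\lambda_h$ coefficientwise and apply H\"older's inequality with a carefully chosen splitting of the weight exponent so that the two resulting factors are exactly a Besov norm of $h$ and a power of a Besov norm of $u$.

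First I would write, for the partial sums,
\begin{equation*}
\left|\sum_{\ell=1}^N \beta^\lambda_{h_\ell}(u_\ell)\right|
\;\le\; p\sum_{\ell=1}^N |h_\ell|\, \ell^{p(\frac{s}{d}+\frac{1}{2})-1}\, |u_\ell|^{p-1}.
\end{equation*}
The key step is to split the weight $\ell^{p(s/d+1/2)-1} = \ell^{\alpha}\ell^{\beta}$ and apply H\"older with exponents $p$ and $p/(p-1)$, choosing $\alpha,\beta$ so that
\begin{equation*}
\alpha p \;=\; p\!\left(\tfrac{ps-(p-1)t}{d}+\tfrac{1}{2}\right)-1,
\qquad
\frac{\beta p}{p-1} \;=\; p\!\left(\tfrac{t}{d}+\tfrac{1}{2}\right)-1.
\end{equation*}
A short arithmetic check confirms that with these choices $\alpha+\beta = p(s/d+1/2)-1$, so the decomposition is consistent. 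With this split, H\"older gives
\begin{equation*}
\sum_{\ell=1}^N |h_\ell|\, \ell^{p(\frac{s}{d}+\frac{1}{2})-1}|u_\ell|^{p-1}
\;\le\;
\left(\sum_{\ell=1}^N \ell^{\alpha p}|h_\ell|^p\right)^{\!1/p}
\left(\sum_{\ell=1}^N \ell^{\beta p/(p-1)}|u_\ell|^p\right)^{\!(p-1)/p}.
\end{equation*}
By the very definition of the Besov norm via wavelet coefficients, the first factor is bounded by $\|h\|_{B^{ps-(p-1)t}_p}$ and the second by $\|u\|_{B^t_p}^{p-1}$.

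To pass from partial sums to $\beta^\lambda_h$ itself, I would invoke Lemma~\ref{lem:besov_lss}, which gives $\|u\|_{B^t_p}<\infty$ for $\lambda$-almost every $u$. Hence the right-hand side of the H\"older estimate is finite $\lambda$-a.s.\ and independent of $N$, so the series $\sum_\ell \beta^\lambda_{h_\ell}(u_\ell)$ converges absolutely pointwise on a set of full $\lambda$-measure. Since it also converges in $L^1(\lambda)$ to $\beta^\lambda_h(u)$ by the preceding theorem, the pointwise limit coincides with $\beta^\lambda_h(u)$ $\lambda$-a.s., and the claimed bound is inherited in the limit with $C=p$.

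The only non-routine point is getting the H\"older splitting right; once the exponents $\alpha$ and $\beta$ are pinned down so that each factor matches a genuine Besov norm (and not some awkward weighted $\ell^p$-space), the rest is bookkeeping plus an appeal to Lemma~\ref{lem:besov_lss} to upgrade the estimate from a formal bound on partial sums to a $\lambda$-a.s.\ bound on $\beta^\lambda_h$.
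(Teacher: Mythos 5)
Your proposal is correct and follows essentially the same route as the paper: a coefficientwise bound on the partial sums, followed by Hölder with exponents $\bigl(p,\tfrac{p}{p-1}\bigr)$ and a splitting of the weight $\ell^{p(s/d+1/2)-1}$ chosen precisely so that the two factors become $\norm{h}_{B^{ps-(p-1)t}_p}$ and $\norm{u}_{B^t_p}^{p-1}$ (your $\alpha,\beta$ are the paper's $r_2, r_1$). The only addition is your explicit appeal to Lemma~\ref{lem:besov_lss} to pass from partial sums to the $\lambda$-a.s.\ pointwise bound, which the paper leaves implicit.
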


\begin{proof}
The logarithmic derivative satisfies
\begin{eqnarray}
	\label{eq:besov_ineq_aux1}
	\left|\sum_{\ell=1}^N \beta_{h_\ell}^\lambda(u_\ell)\right| & \leq & C \sum_{\ell=1}^\infty \ell^{p(\frac sd+\frac 12)-1} |u_\ell|^{p-1} |h_\ell| \nonumber \\
	& \leq & C \left(\sum_{\ell=1}^\infty \ell^{r_1 \frac{p}{p-1}} |u_\ell|^p\right)^{1-1/p} 
	\left(\sum_{\ell=1}^\infty \ell^{r_2p} |h_\ell|^p \right)^{1/p},
\end{eqnarray}
for some constant $C>0$ and any $N$. Above, we have applied H\"{o}lder inequality with $\left(\frac{p}{p-1},p\right)$-exponents and chosen $r_1$ and $r_2$ in such a way that
\begin{equation*}
	r_1 \frac{p}{p-1} = p\left(\frac td +\frac 12\right)-1, \quad \textrm{that is,} \quad
	r_1 = (p-1)\left(\frac td +\frac 12\right) - 1 + \frac 1p
\end{equation*}
and 
\begin{equation*}
	r_1+r_2 = p\left(\frac sd + \frac 12\right) -1.
\end{equation*}
It follows that
\begin{equation*}
	r_2 p  = 
	\frac pd (ps-(p-1)t)+ \frac p2 -1.
\end{equation*}
Such parametrization applied to inequality \eqref{eq:besov_ineq_aux1} yields the claim.
\end{proof}

It turns out in the following that the choice $E= B^{ps-(p-1)t}_p(\T^d)$ satisfies the required assumptions.

\begin{lemma}
\label{lem:besov_exp_in_L1}
We have $\exp(|\beta^\lambda_h|) \in L^1(\lambda)$ for any $h\in E$.
\end{lemma}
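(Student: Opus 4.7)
The plan is to combine the pointwise estimate from Lemma \ref{lem:besov_beta_ineq} with the exponential moment bound of Lemma \ref{lem:besov_lss}(ii). The Lemma gives
\begin{equation*}
  |\beta^\lambda_h(u)| \leq C \norm{u}_{B^t_{p}}^{p-1} \norm{h}_{B_{p}^{ps-(p-1)t}} \quad \lambda\text{-a.s.},
\end{equation*}
so for fixed $h\in E$ the quantity $K := C\norm{h}_{B_{p}^{ps-(p-1)t}}$ is a finite constant and it suffices to control $\int_{\R^\infty}\exp(K\norm{u}_{B^t_p}^{p-1})\,\lambda(du)$. Since $p>1$, the exponent $p-1$ is strictly less than $p$, which means this integrand grows more slowly at infinity than the $\exp(\tfrac12\norm{u}_{B^t_p}^p)$ that Lemma \ref{lem:besov_lss}(ii) already tells us is $\lambda$-integrable (for $t<s-d/p$, which we may fix once and for all here).

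To make this precise I would apply Young's inequality with conjugate exponents $\bigl(p/(p-1),\,p\bigr)$ to the product $K\cdot\norm{u}_{B^t_p}^{p-1}$: for any $\epsilon>0$,
\begin{equation*}
  K\norm{u}_{B^t_p}^{p-1} \;=\; \bigl(\epsilon\,\norm{u}_{B^t_p}^{p-1}\bigr)\cdot \frac{K}{\epsilon} \;\leq\; \frac{p-1}{p}\,\epsilon^{p/(p-1)}\,\norm{u}_{B^t_p}^{p} \;+\; \frac{1}{p}\,\Bigl(\frac{K}{\epsilon}\Bigr)^{p}.
\end{equation*}
Choosing $\epsilon$ small enough so that $\tfrac{p-1}{p}\epsilon^{p/(p-1)}\leq \tfrac12$ gives
\begin{equation*}
  \exp\bigl(|\beta^\lambda_h(u)|\bigr) \;\leq\; \exp\Bigl(\tfrac{1}{p}(K/\epsilon)^{p}\Bigr)\cdot \exp\Bigl(\tfrac12\norm{u}_{B^t_p}^{p}\Bigr),
\end{equation*}
$\lambda$-a.s., and integrating against $\lambda$ yields the finite bound by Lemma \ref{lem:besov_lss}(ii).

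The two minor bookkeeping points that need checking are that Lemma \ref{lem:besov_beta_ineq} was stated $\lambda$-almost surely (so the exponential inequality above also holds $\lambda$-a.s., which is all that is needed under the integral sign), and that we are free to pick some $t$ with $t<s-d/p$ in order to invoke Lemma \ref{lem:besov_lss}(ii); the constant $K$ then depends on this choice but that is irrelevant for integrability. I do not anticipate a genuine obstacle here: the proof is essentially the observation that $p-1<p$, so Young's inequality absorbs the $u$-dependence into the Gaussian-type exponential moment already guaranteed for the Besov prior.
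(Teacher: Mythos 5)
Your proof is correct and follows essentially the same route as the paper: both combine the pointwise bound of Lemma \ref{lem:besov_beta_ineq} with the exponential moment in Lemma \ref{lem:besov_lss}(ii), exploiting $p-1<p$ to absorb the growth into $\exp(\tfrac12\norm{u}_{B^t_p}^p)$. The only cosmetic difference is that you perform the absorption via Young's inequality with a small parameter $\epsilon$, whereas the paper splits the domain into the regions $\norm{u}_{B^t_p}\gtrless 2C\norm{h}_{B^{ps-(p-1)t}_p}$ and bounds each piece separately.
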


\begin{proof}
We write $A_1 = \{u\in B^t_p \; | \; \norm{u}_{B^t_p} \geq 2 C \norm{h}_{B^{ps-(p-1)t}_p}\}$ and
$A_2 = B^t_p \setminus A_1$. Now it follows by Lemmas \ref{lem:besov_beta_ineq} and \ref{lem:besov_lss} that
\begin{multline*}
\int_{B^t_p(\T^d)} \exp(|\beta^\lambda_h(u)|) \lambda(du) \\ \leq \int_{A_1} \exp\left(\frac 12 \norm{u}^p_{B^t_p}\right) \lambda(du) 
+ \int_{A_2} \exp(C \cdot (2C)^{p-1}) \lambda(du) < \infty
\end{multline*}
for any $h\in E$.
\end{proof}

Given the bound in Lemma \ref{lem:besov_beta_ineq} we can pointwise define
$\tilde \beta^\lambda_h(u) = \sum_{\ell=1}^\infty \beta_{h_\ell}^\lambda(u_\ell)$
when $h\in E$.

\begin{proposition}
\label{prop:besov_has_cont_repres}
It holds that $\tilde \beta_h^\lambda \in C(B^t_p(\T^d))$ for any $h\in B^{ps-(p-1)t}_p(\T^d)$ and $1<p\leq 2$.
\end{proposition}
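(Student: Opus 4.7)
The plan is to exhibit $\tilde\beta^\lambda_h$ as a locally uniform limit of continuous functions, with the continuity of the partial sums coming from the wavelet coefficient functionals and the scalar map $x \mapsto \sign(x)|x|^{p-1}$, and the uniform control coming straight from the Hölder estimate already derived in Lemma \ref{lem:besov_beta_ineq}.

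First I would set
\begin{equation*}
S_N(u) \;=\; \sum_{\ell=1}^N \beta^\lambda_{h_\ell}(u_\ell) \;=\; -p \sum_{\ell=1}^N h_\ell\, \ell^{p(s/d+1/2)-1}\, \sign(u_\ell)|u_\ell|^{p-1}
\end{equation*}
and argue that each $S_N : B^t_p(\T^d) \to \R$ is continuous. Since $\{\psi_\ell\}$ is an orthonormal wavelet basis that characterizes $B^t_p(\T^d)$ through the coefficient norm in \eqref{eq:orig_norm}, the coefficient functional $u \mapsto u_\ell$ is bounded linear on $B^t_p(\T^d)$; composition with the scalar function $x \mapsto \sign(x)|x|^{p-1}$, which is continuous on $\R$ precisely because $p > 1$ (this is the step that rules out $p=1$), gives continuity of each summand, and hence of the finite sum $S_N$.

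The key remaining point is to upgrade pointwise convergence $S_N(u) \to \tilde\beta^\lambda_h(u)$ to local uniform convergence on $B^t_p(\T^d)$. I would re-run the Hölder argument from the proof of Lemma \ref{lem:besov_beta_ineq}, but applied only to the tail $\ell > N$, yielding
\begin{equation*}
\bigl|\tilde\beta^\lambda_h(u) - S_N(u)\bigr| \;\leq\; C \,\|u\|_{B^t_p}^{p-1}\,\Bigl(\sum_{\ell=N+1}^\infty \ell^{r_2 p}|h_\ell|^p\Bigr)^{1/p},
\end{equation*}
with the same exponents $r_1, r_2$ as in the lemma. Because $h \in B^{ps-(p-1)t}_p(\T^d)$, the tail in $h$ tends to $0$ as $N \to \infty$, so on every bounded ball $\{\|u\|_{B^t_p} \leq R\}$ the convergence $S_N \to \tilde\beta^\lambda_h$ is uniform. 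Since continuity is a local property and a uniform limit of continuous functions is continuous, we conclude $\tilde\beta^\lambda_h \in C(B^t_p(\T^d))$.

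The only mildly delicate step is the continuity of the nonlinearity $x \mapsto \sign(x)|x|^{p-1}$ at $x=0$, which is where the hypothesis $p > 1$ enters. The restriction $p \leq 2$ does not interfere with the argument above; it is presumably reserved for subsequent estimates (for $1<p\leq 2$ the map is Hölder with exponent $p-1 \in (0,1]$ on all of $\R$, which is convenient for later quantitative work). The main potential pitfall is sign issues when $u_\ell = 0$, but since $x \mapsto \sign(x)|x|^{p-1}$ extends continuously to $\R$ with value $0$ at the origin, this causes no trouble.
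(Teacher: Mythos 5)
Your proof is correct, but it takes a genuinely different route from the paper's. The paper argues sequential continuity directly: for $u^n\to u$ in $B^t_p(\T^d)$ it bounds $|\tilde\beta^\lambda_h(u)-\tilde\beta^\lambda_h(u^n)|$ termwise using the elementary inequality $\left|\sign(a)|a|^{p-1}-\sign(b)|b|^{p-1}\right|\le 2|a-b|^{p-1}$ (valid only for $1\le p\le 2$) and then reapplies the H\"older argument of Lemma \ref{lem:besov_beta_ineq} to $u-u^n$, obtaining a bound of order $\norm{u-u^n}_{B^t_p}^{p-1}\norm{h}_{B^{ps-(p-1)t}_p}$. You instead establish continuity of the partial sums $S_N$ and upgrade to locally uniform convergence via the tail version of the same H\"older estimate. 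Both arguments are sound, but yours is actually slightly stronger: nothing in your proof uses $p\le 2$, since the tail estimate and the continuity of $x\mapsto\sign(x)|x|^{p-1}$ only need $p>1$, whereas the paper's pointwise inequality genuinely fails for $p>2$ (take $a=n+1$, $b=n$ with $n$ large). So your guess that the restriction $p\le 2$ is ``reserved for subsequent estimates'' is not quite right --- in the paper it is precisely this proposition's proof that imposes it, and your argument would remove that restriction from the statement (the surrounding lemmas on the Besov prior do not require $p\le 2$ either). The trade-off is that the paper's proof is shorter and gives an explicit modulus of continuity in $\norm{u-u^n}_{B^t_p}^{p-1}$, while yours is more robust and generalizes.
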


\begin{proof}
Suppose $u^n$ converges to $u$ in $B^t_p(\T^d)$ when $n$ increases and denote $u^n = \sum_{\ell=1}^\infty u^n_\ell \psi_\ell$.
Now we have 
\begin{multline*}
	\left|\tilde \beta^\lambda_h(u) - \tilde \beta^\lambda_h(u^n)\right| \\
	= C\sum_{\ell=1}^\infty \ell^{p(s/d+1/2)-1} \left|\sign(u_\ell) |u_\ell|^{p-1}-\sign(u^n_\ell) |u^n_\ell|^{p-1}\right|  |h_\ell|
\end{multline*}
and since the following elementary inequality
\begin{equation*}
	\left|\sign(a) |a|^{p-1} - \sign(b) |b|^{p-1}\right| \leq 2 |a-b|^{p-1}
\end{equation*}
holds for arbitrary $a,b\in \R$ with $1\leq p \leq 2$, we must have
$\lim_{n\to\infty} \beta^\lambda_h(u^n) = \beta^\lambda_h(u)$ by similar H\"{o}lder argument as in Lemma \ref{lem:besov_beta_ineq}.
\end{proof}

From Lemma \ref{lem:besov_exp_in_L1} and Proposition \ref{prop:besov_has_cont_repres} we obtain that the probability distribution $\lambda$ of a $B_p^s$ prior with $1<p\leq 2$ satisfies assumption (A1) and (A2) with $X=B_p^t(\T^d)$ for any $t<s-d/p$ and $E = B_p^{ps-(p-1)t}(\T^d)$. Moreover, we find that the functional 
\begin{equation*}
	J(u) = \sum_{\ell=1}^\infty \ell^{ps/d+p/2-1} |u_\ell|^p = \norm{u}_{B^s_{p}}^p 
\end{equation*}
satisfies assumptions in Corollary \ref{cor:variational_map_J}
and we have $\beta^\lambda_h(u) = -J'(u)h$ for all $h\in E$.
As an interesting remark, notice that $D(J) = B^s_p(\T^d) \subset D(\lambda) = B^{s+(\frac 12 - \frac 1p)d}(\T^d)$ by a well-known embedding theorem \cite{SchmeisserTriebel}.

Since $E = B^{ps-(p-1)t}_p(\T^d)$ is separable and dense in $X = B^t_p(\T^d)$ with some fixed $t<s-\frac dp$,
the weak MAP estimate to problem \eqref{eq:inv_problem} under assumptions used in Corollary \ref{cor:variational_map_J} is obtained by minimizing the functional
\begin{equation*}
	F_{Besov}(u) = \frac 12 |Au-m|^2 + \norm{u}_{B^s_{p}}^p.
\end{equation*}
Notice that $F_{Besov}$ has unique minimum.

\section{Example 2: Hierarchical prior}
\label{sec:hierarchical}

Consider a situation where a Gaussian distribution seems to provide a good prior model for the unknown with the only drawback that there is high uncertainty regarding the mean value. In this case, the mean value becomes part of the Bayesian inference problem and the prior then takes the form of what is often called a hierarchical model \cite{KS}.

In the following, let $\nu$ be a zero-mean Gaussian measure on a separable Hilbert space $X$ with a covariance operator $C$. Recall that notation $\nu_y(\cdot) = \nu(\cdot - y)$ stands for the translated measure
and $\nu_y$ and $\nu$ are absolutely continuous if and only if $y\in H(\nu)$, i.e., $y$ belongs to the Cameron--Martin space of $\nu$. Suppose now that $e \in C X \subset H(\nu)$ and let a parameter $t\in\R$ be distributed according to probability density $\rho \in C^1(\R)$ such that $\rho>0$ everywhere. We define our hierarchical prior $\lambda$ by the equality
\begin{equation*}
	\lambda({\mathcal A}\times {\mathcal T})
	= \int_{\mathcal T} \nu_{te}({\mathcal A}) \rho(t)dt
\end{equation*}
for any ${\mathcal A} \in {\mathcal B}(X)$ and ${\mathcal T} \in {\mathcal B}(\R)$. Below, we write $\nu_t = \nu_{te}$ for convenience. In addition, we assume that there exists $\epsilon>0$ such that
\begin{equation}
	\label{eq:hierarchical_assumption}
\exp(\epsilon (|t| + |\beta^\rho(t)|)) \in L^1(\rho \cdot dt).
\end{equation}
Note that e.g. any Gaussian distribution on $\R$ satisfies equation \eqref{eq:hierarchical_assumption}.

The convexity of $\lambda$ follows if $\rho$ is a convex distribution. Namely, any finite-dimensional projection has
a Gaussian probability density or a density of type
\begin{equation}
	\label{eq:hierarchical_finite_density}
	\pi(u,t) \propto \exp\left(-\frac 12 \norm{\widetilde C^{-1/2}(u-te)}^2_{\R^n}\right) \rho(m)
\end{equation}
where $\widetilde C \in \R^{n\times n}$ is the projected covariance matrix. Since the squared norm in equation \eqref{eq:hierarchical_finite_density} is convex with respect to $(u,t) \in \R^{n+1}$, the convexity of $\lambda$ is obtained by \cite[Prop. 4.3.3.]{Boga10}. In addition, $\lambda$ clearly has a full support.
\begin{theorem}
We have
$D(\lambda) = D(\nu) \times \R$ and
\begin{equation}
\label{eq:beta_hierarchical}
\beta^{\lambda}_{(h,1)} (u,t) = \beta^{\nu_t}_h(u) - \beta^{\nu_t}_e(u) + \beta^\rho(t)
\end{equation}
in $L^1(\lambda)$.
\end{theorem}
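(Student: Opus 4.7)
The plan is to verify the formula \eqref{eq:beta_hierarchical} via the integration-by-parts characterization of Fomin differentiability, and then to deduce the set equality $D(\lambda)=D(\nu)\times\R$ by linearity together with a disintegration argument. The core computational trick is a change of variables that decouples the simultaneous translation in $u$ and $t$. For $h\in D(\nu)$ I would begin with
\[
\lambda({\mathcal A}\times{\mathcal T}+s(h,1))=\int_{{\mathcal T}+s}\nu_{t}({\mathcal A}+sh)\rho(t)dt = \int_{\mathcal T}\nu_\tau({\mathcal A}+s(h-e))\rho(\tau+s)d\tau,
\]
where the second equality uses the substitution $\tau=t-s$ together with the identity $\nu_{(\tau+s)e}({\mathcal A}+sh)=\nu_\tau({\mathcal A}+s(h-e))$. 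Differentiating in $s$ at $s=0$ under the integral via the product rule then yields
\[
d_{(h,1)}\lambda({\mathcal A}\times{\mathcal T})=\int_{\mathcal T}\int_{\mathcal A}\bigl[\beta^{\nu_\tau}_{h-e}(u)+\beta^\rho(\tau)\bigr]\nu_\tau(du)\rho(\tau)d\tau,
\]
and the linearity $\beta^{\nu_\tau}_{h-e}=\beta^{\nu_\tau}_h-\beta^{\nu_\tau}_e$ produces exactly the claimed formula.

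To justify both the differentiation under the integral and the membership $\beta^\lambda_{(h,1)}\in L^1(\lambda)$, I would bound each term separately. The Gaussian logarithmic derivatives $\beta^{\nu_\tau}_h$ and $\beta^{\nu_\tau}_e$ are continuous linear functionals of $u-\tau e$ which have exponential moments under $\nu_\tau$ by Fernique's theorem, growing at most polynomially in $\tau$, while assumption \eqref{eq:hierarchical_assumption} controls $\beta^\rho(\tau)$, $|\tau|$ and hence the growth inherited from the Gaussian shift. Combining these bounds via Fubini delivers both the dominating function needed to differentiate past the integral and the $L^1(\lambda)$-estimate on $\beta^\lambda_{(h,1)}$.

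For the set equality, the inclusion $D(\nu)\times\R\subseteq D(\lambda)$ follows from the computation above: taking $h=0$ yields $(0,1)\in D(\lambda)$, and since $D(\lambda)$ is a linear space, $(h,s)\in D(\lambda)$ for every $h\in D(\nu)$ and $s\in\R$. For the reverse inclusion, given $(h,s)\in D(\lambda)$ I would write $(h,0)=(h,s)-s(0,1)\in D(\lambda)$ and test the weak integration-by-parts identity against cylindrical functions of the product form $\phi(u)\psi(t)$ with $\psi\in C_c^\infty(\R)$ arbitrary; letting $\psi$ vary and using $\rho>0$ everywhere shows that for $\rho$-a.e.\ $t$ the translate $\nu_t$ admits $\beta^\lambda_{(h,0)}(\cdot,t)$ as a logarithmic derivative along $h$, and since translation preserves Fomin differentiability ($D(\nu_t)=D(\nu)$), this forces $h\in D(\nu)$.

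The principal technical hurdle will be the uniform integrability required to commute differentiation and integration — in particular, controlling the contribution of $\rho'(\tau+s)$ uniformly for small $s$ jointly with the Gaussian tails — for which the hypothesis \eqref{eq:hierarchical_assumption} on $\exp(\epsilon(|t|+|\beta^\rho(t)|))$ is tailor-made. The remaining steps are conceptually transparent once the change-of-variables trick has reduced the joint shift to one acting only on the first coordinate combined with a pure shift in $\rho$.
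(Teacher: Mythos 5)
Your proposal is correct and follows essentially the same route as the paper: both differentiate the mixture representation $\lambda(\cdot\times{\mathcal T})=\int_{\mathcal T}\nu_{te}(\cdot)\rho(t)dt$ under the integral sign, identify the three contributions $\beta^{\nu_t}_h$, $-\beta^{\nu_t}_e$ and $\beta^\rho$, and justify the passage to the limit by dominated convergence using \eqref{eq:hierarchical_assumption}. The paper organizes the computation by telescoping the difference quotient into three increments (shift of ${\mathcal A}$ along $h$, shift of the Gaussian mean, shift of $\rho$), whereas your substitution $\tau=t-s$ collapses the first two into a single translation by $s(h-e)$ of the centred Gaussian before applying the product rule; the two are mathematically equivalent, and yours is arguably tidier. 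Two remarks. First, if you compute on product sets ${\mathcal A}\times{\mathcal T}$ you still owe the extension to all of ${\mathcal B}(X\times\R)$ (the paper does this via a Dynkin $\pi$-$\lambda$ argument); alternatively, commit fully to the weak integration-by-parts characterization with cylindrical test functions, in which case no such extension is needed --- but then carry out the change of variables at the level of test functions rather than sets. Second, your disintegration argument for the reverse inclusion $D(\lambda)\subseteq D(\nu)\times\R$ (testing against $\phi(u)\psi(t)$, using a countable separating family of $\phi$'s and $D(\nu_t)=D(\nu)$) is a genuine addition: the paper's proof only establishes $\supseteq$ explicitly, so this closes a gap in the stated equality rather than duplicating the paper.
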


\begin{proof}
Let us consider the Fomin derivative for the Borel set $\tilde {\mathcal A} = {\mathcal A}\times {\mathcal T} \in {\mathcal B}(X\times \R)$, where ${\mathcal A} \in {\mathcal B}(X)$ and ${\mathcal T}\in {\mathcal B}(\R)$. By straightforward manipulation we can decompose the differential as follows
\begin{eqnarray*}
I^\epsilon & = & \frac 1\epsilon \left(\lambda_{-\epsilon (h,1)}(\tilde {\mathcal A}) - \lambda(\tilde {\mathcal A})\right) \\
& = & \frac 1 \epsilon \left( \int_{\mathcal T+\epsilon} \nu_{t}({\mathcal A}+\epsilon h) \rho(t)dt - \int_{\mathcal T} \nu_{t}({\mathcal A}) \rho(t)dt \right) \\
& = & I^\epsilon_1 + I^\epsilon_2 + I^\epsilon_3,
\end{eqnarray*}
where
\begin{eqnarray*}
I^\epsilon_1 & = & \frac 1 \epsilon \int_{\mathcal T+\epsilon} \left(\nu_{t}({\mathcal A}+\epsilon h)- \nu_{t}({\mathcal A})\right) \rho(t)dt \\
I^\epsilon_2 & = & \frac 1 \epsilon \int_{\mathcal T} \left(\nu_{t+\epsilon}({\mathcal A})- \nu_t({\mathcal A})\right) \rho(t+\epsilon)dt \quad {\rm and} \\
I^\epsilon_3 & = & \frac 1 \epsilon \int_{\mathcal T} \nu_t({\mathcal A})\left(\rho(t+\epsilon) - \rho(t)\right) dt.
\end{eqnarray*}

Since $e \in CX \subset D(\nu)$ and $d\nu_t({\mathcal A})$ is uniformly bounded with respect to $t\in\R$, we can apply the Lebesgue dominated convergence theorem to each term and obtain
\begin{eqnarray}
	\label{eq:hier_limit_diff}
	d_{(h,1)}\lambda(\tilde {\mathcal A}) & = & \lim_{\epsilon\to 0} (I^\epsilon_1+I^\epsilon_2+I^\epsilon_3)\nonumber \\
	& = & \int_{\mathcal T} \left( d_h \nu_t ({\mathcal A}) \rho(t) 
	+ d_{-e} \nu_t({\mathcal A}) \rho(t) 
	+ \nu_t({\mathcal A}) \rho'(t) \right)dt\nonumber \\
	& = & \int_{\tilde {\mathcal A}} \left(\beta^{\nu_t}_h(u) - \beta^{\nu_t}_{e}(u) + \beta^\rho(t) \right) \lambda(du\times dt).
\end{eqnarray}
We see that the limit in \eqref{eq:hier_limit_diff} is finite and due to countably additivity of $\lambda$ the equality generalizes for any 
set belonging to
\begin{equation*}
	{\mathbb A} = \left\{\bigcup_{j=1}^\infty {\mathcal A}_j \times {\mathcal T}_j \; \bigg| \; {\mathcal A}_j \in {\mathcal B}(X), {\mathcal T}_j \in {\mathcal B}(\R),  \{{\mathcal A}_j \times {\mathcal T}_j\}_{j=1}^\infty \; \textrm{are disjoint}\right\}.
\end{equation*}
Now it is easy to see that ${\mathbb A}$ forms a Dynkin system. On the other hand, the measurable sets of form $\tilde {\mathcal A} = {\mathcal A} \times {\mathcal T}$ form a $\pi$-system which belongs to ${\mathbb A}$. By Dynkin's $\pi$-$\lambda$ theorem one obtains that ${\mathcal A} = {\mathcal B}(X\times \R)$ and since $\beta^{\nu_t}_h(u), \beta^{\nu_t}_{e}(u) \in L^1(\lambda)$, the claim follows.
\end{proof}

\begin{lemma}
The hierarchical prior $\lambda$ satisfies following properties:
\begin{itemize}
\item[(1)] for any $h\in C X$ the logarithmic derivative $\beta^\lambda_{(h,1)}$ has a continuous representative, i.e., in $E = C X \otimes \R$ and
\item[(2)] $\exp(\epsilon |\beta^{\lambda}_{(h,1)}(\cdot)|) \in L^1(\lambda)$ for some $\epsilon>0$.
\end{itemize}
\end{lemma}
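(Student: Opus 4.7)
The plan is to substitute the explicit formula from the previous theorem for $\beta^\lambda_{(h,1)}$ and then handle parts (1) and (2) separately.

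First I unfold the expression. Using $\beta^{\nu_t}_h(u) = -\langle h, u - te\rangle_{H(\nu)}$ for $h \in H(\nu)$ (from the Gaussian remark following Example \ref{ex:gaussian1}, applied to the translated measure $\nu_t$), equation \eqref{eq:beta_hierarchical} simplifies to
\begin{equation*}
\beta^\lambda_{(h,1)}(u,t) = -\langle h - e,\, u - te\rangle_{H(\nu)} + \beta^\rho(t).
\end{equation*}
Both $h$ and $e$ lie in $CX$, so $h - e \in CX \subset H(\nu)$, and $\langle h-e,\cdot\rangle_{H(\nu)}$ admits the continuous representative $\langle C^{-1}(h-e),\cdot\rangle_X$ on $X$ (in fact $C^{-1}(h-e)$ is a well-defined element of $X$ since $h-e\in CX$). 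Similarly, since $\rho \in C^1(\R)$ and $\rho > 0$, the logarithmic derivative $\beta^\rho(t) = \rho'(t)/\rho(t)$ is continuous. Hence the right-hand side has a continuous representative on $X \times \R$, which proves (1).

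For (2), the key observation is that under $\lambda$, once we change variables $v = u - te$ (for each $t$), the joint distribution of $(v,t)$ factors as $\nu(dv)\times \rho(t)dt$, because the conditional law of $u$ given $t$ is the translated Gaussian $\nu_t$. Using the pointwise bound
\begin{equation*}
\exp\bigl(\epsilon|\beta^\lambda_{(h,1)}(u,t)|\bigr) \leq \exp\bigl(\epsilon\,|\langle h - e,\,u - te\rangle_{H(\nu)}|\bigr)\cdot \exp\bigl(\epsilon\,|\beta^\rho(t)|\bigr),
\end{equation*}
Fubini yields
\begin{equation*}
\int_{X\times\R} \exp(\epsilon|\beta^\lambda_{(h,1)}|)\,d\lambda \leq \Bigl(\int_X \exp(\epsilon|\langle h-e,v\rangle_{H(\nu)}|)\,\nu(dv)\Bigr)\cdot \Bigl(\int_\R \exp(\epsilon|\beta^\rho(t)|)\,\rho(t)\,dt\Bigr).
\end{equation*}
The first factor is finite for every $\epsilon > 0$ by the Fernique theorem applied to the Gaussian measure $\nu$ (as in Example \ref{ex:gaussian2}), since $\langle h-e,\cdot\rangle_{H(\nu)}$ is a measurable linear functional with Gaussian distribution. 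The second factor is finite for $\epsilon$ smaller than the exponent in assumption \eqref{eq:hierarchical_assumption}. Taking $\epsilon>0$ small enough to satisfy both constraints gives the claim.

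The only mildly delicate point is the continuity in part (1): naively $\langle h-e,\cdot\rangle_{H(\nu)}$ is only defined $\nu$-a.e. on $X$ and need not be continuous, so the assumption $e \in CX$ (and $h\in CX$) is used essentially to replace it by the genuinely continuous functional $\langle C^{-1}(h-e),\cdot\rangle_X$. Everything else reduces to routine Fernique-plus-Fubini bookkeeping.
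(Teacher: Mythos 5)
Your proof is correct, and for part (2) it takes a genuinely different route from the paper. Part (1) is essentially what the paper intends (it only says the claim is ``trivial'' from \eqref{eq:beta_hierarchical}); your explicit reduction to $\beta^\lambda_{(h,1)}(u,t) = -\langle h-e, u-te\rangle_{H(\nu)} + \beta^\rho(t)$ and the observation that $\langle g,\cdot\rangle_{H(\nu)} = \langle C^{-1}g,\cdot\rangle_X$ is genuinely continuous for $g\in CX$ is the right justification and is worth spelling out. For part (2), the paper keeps the variable $u$, bounds $|\beta^\lambda_{(h,1)}(u,t)| \leq C_1\norm{u}_X + C_2|t| + |\beta^\rho(t)|$, rewrites $\nu_{te}$ against $\nu$ via the Cameron--Martin density $\exp(t\langle e,u\rangle_{H(\nu)} - \tfrac{t^2}{2}\norm{e}^2_{H(\nu)})$, and completes a square to decouple $u$ from $t$; this leaves a factor $\exp(\epsilon C_3\norm{u}_X^2)$ that genuinely requires the Fernique theorem and forces $\epsilon$ small on the Gaussian side as well. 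Your change of variables $v = u - te$, under which $\lambda$ pushes forward exactly to $\nu \otimes \rho\,dt$, makes the integral factor with no square-completion, and the Gaussian factor involves only the linear functional $\langle h-e,\cdot\rangle_{H(\nu)}$, whose exponential moments are finite for \emph{every} $\epsilon>0$ (a one-dimensional Gaussian computation; invoking Fernique here is mild overkill but harmless). So the only constraint on $\epsilon$ in your argument comes from assumption \eqref{eq:hierarchical_assumption}, and the argument is shorter and slightly sharper than the paper's.
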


\begin{proof}
The first claim follows trivially from equation \eqref{eq:beta_hierarchical}.

In order to prove the second claim, consider the following bound
\begin{equation*}
	\beta^\lambda_{(h,1)}(u,t) \leq C_1 \norm{u}_X + C_2 |t| + |\beta^\rho(t)|
\end{equation*}
where the constants $C_1, C_2> 0$ depend on the variable $e$. Using the Cameron--Martin formula we can write
\begin{multline*}
I_\lambda = \int_\R \int_X \exp(\epsilon|\beta^\lambda_{(h,1)}(u,t)|) \nu_t(du) \rho(t) dt \\
\leq \int_\R \int_X \exp(\epsilon(C_1 \norm{u}_X + C_2 |t| + |\beta^\rho(t)|))  \\
\times \exp\left(t\bra e, u\cet_{H(\nu)}-\frac{t^2}2 \norm{e}^2_{H(\nu)}\right) \nu(du) \rho(t) dt
\end{multline*}
for any $\epsilon>0$. The power of second exponential term can be reformulated as
\begin{equation}
\label{eq:hierarchical_proof_aux}
t\bra e, u\cet_{H(\nu)}-\frac{t^2}2 \norm{e}^2_{H(\nu)} =
\frac{\bra e, u\cet_{H(\nu)}^2}{2\norm{e}_{H(\nu)}^2} - \left(\frac{\bra e, u\cet_{H(\nu)}}{\sqrt 2\norm{e}_{H(\nu)}}- \frac t{\sqrt 2} \norm{e}_{H(\nu)}\right)^2.
\end{equation}
Since the last term in \eqref{eq:hierarchical_proof_aux} is always negative, the integral $I_\lambda$ can be bounded by following decomposition:
\begin{equation*}
	I_\lambda \leq \int_X \exp\left(\epsilon(C_1 \norm{u}_X + C_3 \norm{u}_X^2)\right) \nu(dx) \times
	\int_\R \exp\left(\epsilon(C_2 |t| + |\beta^\rho(t)|)\right) dt.
\end{equation*}
The integrals above are bounded for some $\epsilon$ due to the Fernique theorem \cite{Boga_gaussian} and assumption in equation \eqref{eq:hierarchical_assumption}, respectively.
\end{proof}

For simplicity, let us assume that $\rho$ is a normal distribution. In consequence, the logarithmic derivative of the posterior $\lambda$ has the form
\begin{equation*}
	\beta^\lambda_{(h,1)}(u,t) = -\bra u-e,u-te\cet_{H(\nu)} - t
\end{equation*}
and, thus, the functional
\begin{equation*}
	J(u,t) = \frac 12 \norm{u-te}^2_{H(\nu)} + \frac 12 t^2
\end{equation*}
satisfies assumption in Corollary \ref{cor:variational_map_J}. Moreover, the wMAP estimate to problem \eqref{eq:inv_problem} is obtained by minimizing
functional
\begin{equation*}
	F_{Hierarchical}(u,t) = \frac 12 |Au-m|^2 + \frac 12 \norm{u-te}^2_{H(\nu)} + \frac 12 |t|^2
\end{equation*}
for $(u,t) \in X \times \R$. In this case, the problem has a unique wMAP estimate.

\section{Discussion and Conclusions}

In this article, we have examined the role of the MAP estimate for an infinite-dimensional Bayesian inverse problems.
The topic has been scarcely studied in the literature mainly because of the difficulty to 
define a MAP estimate for such a problem. Most importantly, it is difficult to connect a topological definition as in \cite{Hegland} to a variational problem. The first breakthrough in this regard was achieved in \cite{Dashti13}
where Gaussian distributions for non-linear problems were considered.
Here, we introduced a novel concept of a weak MAP estimate, which allows a variational study of the MAP estimate in a rather general framework and might be the basis for further results on non-Gaussian priors. As a first step we consider the wMAP estimate in the Bayes cost method utilizing Bregman distances. Similar work for finite-dimensional problems was done in \cite{BL14}.

We recognize that our work leaves some fascinating questions open. Besides the obvious question of infinite-dimensional measurements, we list here three directions of future work. Firstly, our analysis leaves out the case of $p=1$ in the Besov example. This case is particularly interesting due to its role as a sparsity prior. The drawback is that the logarithmic derivative does not have continuous representative (not even in finite dimensions), i.e., assumption (A1) is not satisfied. A generalization would probably need to move from logarithmic derivatives of measures to some new definition taking into account convexity, similar to the step from differentiation to subdifferentials in convex optimization. 

Secondly, under what conditions 
is a weak MAP estimate also a (strong) MAP estimate in sense of Definition \ref{def:map}?
Recall that according to Lemma \ref{lem:map_is_wmap} every MAP estimate is a weak MAP estimate.
Hence, an intriguing challenge is to construct a posterior distribution for which there exist only weak MAP estimates, or conversely, to show that these two concepts coincide.

Thirdly, the role of the set of differentiability $D(\mu)$ is interesting as well. For example, when does the (weak) MAP estimate belong to $D(\mu)$? Notice that for the Besov prior, the wMAP estimate belongs to even a smaller subspace $D(J) \subset D(\mu)$. In what generality does this phenomenon appear? 

Finally, the ability to formulate the MAP estimate via variational approaches can open up new frontiers of research
in infinite-dimensional Bayesian inverse problems as the study of non-Gaussian problems becomes more feasible.
Further, we believe that results represented here provide new insight to discretization invariance \cite{LSS09} and,
consequently, can have direct impact in practical applications. \\

\noindent {\bf Acknowledgements} \\

\noindent The work of TH was supported by the ERC-2010 Advanced
Grant, 267700 - InvProb (Inverse Problems) and by the Academy of Finland via the grant 275177.
The work of MB was partially supported by ERC via Grant EU FP 7 - ERC Consolidator Grant 615216 LifeInverse and by the German Science Foundation DFG via BU 2327/6-1 and EXC 1003 Cells in Motion Cluster of Excellence, M\"unster, Germany.
The authors thank Sari Lasanen (Oulu) and Steffen Dereich (M\"unster) for thoughtful comments regarding this work.

\bibliographystyle{abbrv}

\bibliography{references}

\end{document}